\newcommand{\norm}[1]{\left\lVert#1\right\rVert}
\newcommand{\abs}[1]{\left\lvert#1\right\rvert}
\newcommand{\av}[1]{\left\langle#1\right\rangle}
\def\rank{{\rm rank}}
\newtheorem{theorem}{Theorem}[section]
\newtheorem{lemma}[theorem]{Lemma}
\theoremstyle{definition}
\newtheorem{remark}{Remark}
\theoremstyle{definition}
\DeclareMathOperator*{\argmin}{arg\,min}
\begin{document}

\begin{frontmatter}

\title{Spectral thresholding for the estimation of Markov chain transition operators}
\runtitle{Spectral thresholding for Markov chains}


\begin{aug}


\author{\fnms{Matthias} \snm{L\"offler}\thanksref{a}\ead[label=e2]{matthias.loeffler@stat.math.ethz.ch}}
\and 
\author{\fnms{Antoine} \snm{Picard}\thanksref{d}\ead[label=e4]{ antoine.picard-weibel@ens.fr}}
\thankstext{a}{During the undertaking of parts of this work M. L\"offler was a PhD student at the University of Cambridge and supported by ERC grant UQMSI/647812 and EPSRC grant EP/L016516/1. Further partial support by ETH Foundations of Data Science is gratefully acknowledged. }
\affiliation{ETH Z\"urich \thanksmark{a} and École normale supérieure\thanksmark{d}}

\address[a]{M. L\"offler \\ETH Z\"urich \\Seminar for Statistics \\ 
R\"amistrasse 101, 8092 Zurich, Switzerland  \\
	\printead{e2}}

\address[d]{A. Picard \\ École normale supérieure \\ 45 Rue d'Ulm 75005 Paris, France \\
	\printead{e4}}
\end{aug}
\begin{abstract}
 We consider nonparametric estimation of the transition operator $P$ of a Markov chain and its transition density $p$  where the singular values of $P$ are assumed to decay exponentially fast. This is for instance the case for periodised, reversible multi-dimensional diffusion processes observed in low frequency.
 
We investigate the performance of a spectral hard thresholded Galerkin-type estimator for $P$ and ${p}$, discarding most of the estimated singular triplets. The construction is based on smooth basis functions such as wavelets or B-splines. We show its statistical optimality by establishing matching minimax upper and lower bounds in $L^2$-loss. Particularly, the effect of the dimensionality $d$ of the state space on the nonparametric rate improves from $2d$ to $d$ compared to the case without singular value decay.
\end{abstract}

\begin{keyword}[class=MSC]
\kwd[ Primary ]{62G05},
\kwd[ secondary ]{62C20}
\end{keyword}

\begin{keyword}
\kwd{Markov chain}
\kwd{transition operator}
\kwd{transition density}
\kwd{low rank}
\kwd{SDE}
\kwd{nonparametric estimation}
\kwd{minimax rates of convergence}
\end{keyword}



\end{frontmatter}

\section{Introduction}
We consider an aperiodic and irreducible Markov chain $(X_i )_{i \in \mathbb{N}}$ with the $d$-dimensional torus $\mathbb{T}^d$ as state space. The dynamics of this chain are described by its transition operator,
\begin{equation*}
Pf(x)=\mathbb{E} [f(X_1)|X_0=x]  = \int_{\mathbb{T}^d} f(y) p (x,y) \text{d}y,
\end{equation*}
where $f \in L^2=L^2(\mathbb{T}^d )$. We are interested in nonparametric estimation of the transition density $p(\cdot, \cdot )$ and thus the transition operator $P$, too. \\
Nonparametric estimation of $p$ when assuming smoothness of $p$ has been thoroughly studied, e.g.
\cite{AkakpoLacour, Birge, Clemencon, Lacour, Sart,AthreyaAtuncar98, Roussas69,DoukhanGindes83}. If $p \in H^s$, where $H^s$ denotes the $L^2$-Sobolev space of smoothness $s$, the $L^2$-minimax rates for estimating $p$ are \begin{equation*} \label{Intro Minmax standard} n^{-\frac{s}{2s+2d}}. \end{equation*}
Here we use the additional information provided by assuming that $P$ has an approximately \emph{low rank} structure to improve these rates. Precisely, denoting by $\mu$ the invariant density of $P$ and assuming that $p$ and $\mu$ are bounded, we see that $P$ is a compact operator acting on $L^2(\mu)$ and hence (c.f. Theorem 3.4.1 in \cite{Helemskii06}) has functional singular value decomposition with respect to $\mu$, i.e. 
\begin{equation*}
Pf=\sum_{k \geq 0} \lambda_k \langle u_k, f \rangle_{\mu}v_k~~~~~f \in L^2(\mu),
\end{equation*} 
and we assume that the singular values $\lambda_k$ decay exponentially fast, in the sense that for constants $c, C >0$
\begin{equation*}
\lambda_k \leq C\exp{\left(-c k^{\frac{2}{d}}\right)}.
\end{equation*}
This assumption is motivated by discrete, low frequency observations of periodised, reversible diffusion processes for which it  is fulfilled by virtue of Weyl's law 
\cite{Garding, Hormander,  Ivrii2000, IvriiWeyl, Weyl1911}. Indeed, for a $1$-periodic Lipschitz continuous vector field  $b(x)=(b_1(x), \dots, b_d(x))$, a scalar $1$-periodic $\sigma(x)$ and a standard Brownian motion $W_t$ define the multi-dimensional diffusion process
\begin{equation*} \label{Intro Diffusion 1}
dY_t=b(Y_t)dt+\sigma(Y_t)dW_t,~~~~t \geq 0,
\end{equation*}
and consider its periodised version 
\begin{equation*} \label{Intro Diffusion per}
X_t=Y_t ~~~\text{modulo} ~~~\mathbb{Z}^d, ~~~t \geq 0. 
\end{equation*}
Then $P=P_1$ is one instance of the Feller semigroup $(P_t)_{t \in \mathbb{R}_+}$ with infinitesimal generator $L:H^2 \rightarrow L^2$, and one obtains that $P=\exp(L)$ where
\begin{equation*} \label{Intro Elliptic L} L=\frac{\sigma^2(x)}{2}\sum_{i=1}^d  \frac{\partial^2 }{\partial x_i^2} + \sum_{i=1}^d b_i(x) \frac{\partial }{\partial x_i}. \end{equation*}
$L$ is an elliptic operator and moreover, since the diffusion is assumed to be reversible, $L$ is self-adjoint with respect to the invariant measure $\mu$. For a more thorough explanation and proof of these facts we refer to \cite{BakryGentilLedoux14}. Hence, Weyl's law \cite{Ivrii2000} applies and states that the $k$-th singular value of $L$ is of order $-k^{\frac{2}{d}}$ and thus, as $P=\exp(L)$, the $k$-th singular value of $P$ is bounded by $C\exp({-ck^{\frac{2}{d}}})$. 
\\ \\
Rapid decay of the singular values is also  observed empirically in applications such as molecular dynamics (see e.g. \cite{MaggioniChem}). This has prompted practitioners and applied mathematicians to estimate only the first few singular triplets of $P$ and discard the rest in their analysis \cite{Chodera, Maggioni,Koltai, MaggioniChem, Schutte,SchwantesGibbonPande,  Shukla}. However, often no theoretical guarantees are provided and it is not clear whether their procedures are optimal from a statistical point of view. \\ \\
Low rank assumptions for Markov chains have only recently began to be considered in the statistical literature,  primarily in the finite state case \cite{ZhangLikeli, ZhangWang,DuanWangWenYuan19,ZhuLiWangZhang19}. In these works it is assumed that the transition matrix has a low rank structure and they show nearly optimal rates for their algorithms by keeping only a fixed number of singular triplets or by thresholding singular values. \\ 
After a first version of this paper was uploaded to arXiv, the continuous state space case was also considered by \cite{SunGongDuanWang19}. In contrast to our work, they use a kernel approach and consider the fixed rank case, proving $\ell_2$-error bounds and furthermore providing an algorithm for metastable state clustering. \\ \\ 
We investigate a modified version of one popular method from molecular dynamics for the estimation of $P$, where the number of singular triplets kept  is chosen in a data driven way.  Considering a Galerkin-type estimator \cite{GobetHoffmannReiss, Schutte, SchwantesGibbonPande} we use techniques from low rank matrix estimation and completion  \cite{CandesPlan11,KloppTrace,KoltchinskiiLouniciTsybakov11,CarpentierKim18, CandesPlan10,Cai_Candes_Shen, KloppTrace,NegahbanWainwright,AgarwalNegahbanWainwright12}.  Particularly we show that hard thresholding singular values yields minimax optimal $L^2$-rates 
\begin{equation*}
n^{-\frac{s}{2s+d}} \log(n)^{\frac{d}{2} \frac{s}{2s+d}}
\end{equation*}
over the class of  Markov chains with exponentially decaying singular values. This improves the dependance on the dimension $d$ from $2d$ to almost $d$ compared to the case without singular value decay. Moreover, our analysis reveals that our algorithm keeps at most $C \log(n)^{\frac{d}{2}}$ singular triplets of the estimated transition operator, thus justifying the commonly used approach to discard most of them. Simulations complement our theoretical results and show the improved performance when thresholding singular values.
\section{Main results}\subsection*{Basic Notation }
Let $\mathbb{T}^d$ denote the $d$-dimensional torus, isomorphic to the unit cube $[0,1]^d$ when opposite points are identified, equipped with Lebesgue measure $\lambda$. By $L^2=L^2(\mathbb{T}^d, \lambda)$ we denote the space of square integrable functions (with respect to $\lambda$) on $\mathbb{T}^d$ equipped with Euclidean inner product $\langle \cdot, \cdot \rangle$ and corresponding $L^2$ norm $\| \cdot \|_{L^2}$. We also denote the Euclidean inner product for any finite dimensional vector space by $\langle \cdot, \cdot \rangle$ and the corresponding norm by $\| \cdot \|_2$. For any probability measure $\mu$ on $\mathbb{T}^d$, if $\mu$ has a density with respect to the Lebesgue measure, we denote it in slight abuse of notation by $\mu$, too. Moreover, when considering functions in $L^2\left(\mu\right)=L^2(\mathbb{T}^d, \mu)$, we use the canonical scalar product and denote it by $\av{\cdot, \cdot}_\mu$ with corresponding norm $\| \cdot \|_{L^2(\mu)}$.
$\norm{\cdot}_{{L}^\infty}$ denotes the  ${L}^\infty$ norm. 
$\norm{\cdot}_{F}$ and $\norm{\cdot}_{F, \mu}$ 
denote the Hilbert--Schmidt (Frobenius) norms of operators acting on $L^2$ and $L^2\left(\mu\right)$, respectively, 
while $\norm{\cdot}_{\infty}$ 
denotes the spectral norm for the $\lambda$ 
inner product.  \\
For $s \in \mathbb{N}$ we define the Sobolev space of smoothness $s$ as
\begin{equation*}
H^s:=\{ f \in L^2: \|f\|_{H^s}:=\sum_{|i| \leq s} \| D^i f \|_{L^2 } < \infty \},
\end{equation*}
where $D^{i} \cdot =\partial_{i_1}\dots \partial_{i_d} \cdot $ denotes the partial derivative in direction $i$. 
For $s \notin \mathbb{N}$, $H^s$ is defined through interpolation or equivalently through Fourier methods 
(see Chapter I.9 in \cite{LionsMagenes} or Section 7 in \cite{AdamsFournier}). For $s > 0$ we will also use the H\"older spaces $C^s$ equipped with H\"older norm $\| \cdot \|_{C^s}$. We also employ the same notation for vector fields $f=(f_1, \dots , f_d)$. For example $f \in C^s$ means that  $\| f \|_{C^s}:=\sum_{i} \| f_i\|_{C^s} < \infty$.
We will sometimes use the notation $a \lesssim b$, meaning that $a \leq C b$ for some universal constant $C>0$ which does not depend on $n$.  
\subsection{Assumptions on the model}
For ease of presentation and to avoid boundary issues we assume a periodised setting. However, in principle, our results can be conveyed to general compact state spaces with smooth boundary (and reflected diffusion processes instead of periodized diffusion processes), arguing as in \cite{Nickl20} for arguments involving wavelets and applying results from \cite{Evans} for arguments involving elliptic PDE theory. 

We assume that we observe a Markov chain $\left(X_{i }\right)_{0 \leq i \leq n}$ with state space $\mathbb{T}^d$ and we introduce a set of Markov chains with smoothness index $s>0$ denoted by $\mathcal{M}(s)=\mathcal{M}(s,C_\mu, c_\mu, C_1, C_2, \dots, C_6 )$ fulfilling the following assumptions:
\begin{itemize}
	\item[\textbf{A1}:] $\left(X_{i}\right)_{i \in \mathbb{N}_0}$ is irreducible, aperiodic and has invariant measure $\mu$ which has a density which we will also denote by $\mu$ and $X_0 \thicksim \mu$. 
	\item[\textbf{A2}:] The invariant measure $\mu$ is bounded away from $0$ and $\infty$, i.e. for constants $C_\mu>c_\mu>0$, $c_\mu\leq\mu\leq C_\mu$. 
	\item[\textbf{A3}:]   $\| \mu \|_{H^s} \leq C_1$ for some constant $C_1 >0$.
\end{itemize}
Note that assumption \textbf{A2} implies that $L^2=L^2(\mu)$ and that the pairs of norms  $\norm{\cdot}_{L^2}$ and $\norm{\cdot}_{L^2\left(\mu\right)}$
are equivalent, as well as the induced Hilbert-Schmidt norms (see Lemma \ref{Lemma HS}) and we will therefore use them interchangeably.  

Recall, that the transition operator ${P}$ is defined on $L^2\left(\mu\right)$ by
\begin{equation*}
{P}f (x)= \mathbb{E}\left[ f\left(X_{1}\right)\mid X_0=x\right].
\end{equation*}
We assume that $P$ is an integral operator with kernel $p(x,y)$, the transition density. 
\begin{itemize}
	\item[\textbf{A4}:] $C_2 > p(x,y)>0$ for all $x,y \in \mathbb{T}^d$ and for a constant $C_2 > 1$ . 
\end{itemize}
Since $p$ and $\mu$ are bounded, we have that $p \in L^2(\mu \times \mu)$ and hence $P$ is compact (see Theorem 3.3.1. and Example 1.3.6 in \cite{Helemskii06}). Therefore it has a functional singular value decomposition (e.g. Theorem 3.4.1 in \cite{Helemskii06}): there exist two $\mu$-orthonormal systems $\left(u_k\right)_{k\in \mathbb{N}}$  and $\left(v_k\right)_{k\in \mathbb{N}}$ in ${L}^2\left(\mu\right)$ and a non-negative decreasing sequence $\left(\lambda_k\right)_{k\in \mathbb{N}}$ such that in $L^2(\mu \times \mu)$, 
\begin{align} \label{Assumptions Eigendecomp}
& Pf= \sum_{k} \lambda_k \av{u_k, f}_\mu v_k, ~~~f \in L^2(\mu),\\ &p(x,y)=\sum_k \lambda_k u_k(y)\mu(y)v_k(x).
\end{align}
Having obtained the representation \eqref{Assumptions Eigendecomp} it is thus natural to formulate the remaining assumptions on the singular values and left and right singular functions. We assume that $P$ has an approximately low rank structure with exponential decay of the singular values and that the left and right singular functions obey a certain degree of smoothness.
\begin{itemize}
	\item[\textbf{A5}:] The $k$-th singular value (counting multiplicity) is bounded by \\$C_3 \exp{\left(-C_4 k^{\frac{2}{d}}\right)}$ for positive constants $C_3$ and $C_4$.
	\item[\textbf{A6}:] The absolute spectral gap fulfills $$\gamma:=1-\lambda_1=1-\sup_{f \in L^2(\mu), \langle f, 1 \rangle_{\mu}=0, \|f\|_{L^2(\mu)}=1} \| Pf \|_{L^2(\mu)} \geq C_5$$ for some constant $C_5 >0.$
	\item[\textbf{A7}:] 
	The singular functions $\left(u_k, v_k \right )$ fulfill 
	$\sum_k \lambda_k^2(\norm{u_k}_{H^s}^2+\|v_k\|_{H^s}^2)\leq C_6$ for some constant $C_6> 0$. 
\end{itemize} 
When considering the class $\mathcal{M}(s)=\mathcal{M}(s,C_\mu, c_\mu, C_1, \dots, C_6 )$ we will suppress the dependence on all parameters except $s$, since they, treating them as constants, do not change the minimax rate as long $\mathcal{M}(s)$ has non-empty interior.
We will also write that $p\in \mathcal{M}(s)$ or $P\in \mathcal{M}(s)$ 
if it is the transition density or the transition operator of a Markov chain in $\mathcal{M}(s)$, respectively.  \\ \\
Discrete observations of periodised, reversible  diffusion processes (which have also been considered in \cite{ Kweku, NicklRay, WaaijZanten}) fulfill these assumptions under mild conditions on the volatility function $\sigma$ and drift function $b$ detailed in the Lemma below. This includes for example periodised versions of the Langevin processes considered by Roberts and Tweedie \cite{RobertsTweedie96} or discrete, periodised AR(1) time series. 
\begin{lemma}
	\label{diffusions}
	For a vector field $b(x)=(b_1(x), \dots, b_d(x))$ and a scalar $\sigma(x)$ consider the diffusion process $dY_t=b(Y_t)dt+\sigma(Y_t)dW_t$, $t \geq 0$, and its periodised version $X_t=Y_t ~\text{modulo} ~\mathbb{Z}^d$. Assume that the observations  are given by $(X_i)_{0 \leq i \leq n}$. 
	Moreover, assume that $\sigma(m+x)=\sigma(x)$ and $b(x+m)=b(x)$ for all $m\in \mathbb{Z}^d$ {and that $\sigma^{-2}b= \nabla B$ for some $B \in C^2$}. If $\norm{\sigma^{-2}}_{C^{s-1}}$, $\norm{\sigma^{2}}_{C^{s-1}}$ and $\norm{b}_{C^{s-1}}$ are finite for some $s > 2$, then $p \in \mathcal{M}(s).$ \end{lemma}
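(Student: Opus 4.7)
The strategy is to exploit that the reversibility hypothesis $\sigma^{-2}b=\nabla B$ turns the diffusion into one whose transition operator $P=P_1$ is of the form $e^{L}$ with $L$ a uniformly elliptic, smooth, \emph{self-adjoint} operator on the compact manifold $\mathbb{T}^d$. Once this is in place, every item A1--A7 reduces to a standard fact about the spectrum and eigenfunctions of such an operator.

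First I would verify A1--A4. Solving the stationary Fokker--Planck equation $L^\ast\mu=0$ under the condition that the probability current vanishes (i.e.\ reversibility) gives the explicit invariant density $\mu(x)=Z^{-1}\sigma^{-2}(x)e^{2B(x)}$. Since $\nabla B=\sigma^{-2}b\in C^{s-1}$ one has $B\in C^{s}$ on $\mathbb{T}^d$, so compactness of the torus and strict positivity of $\sigma^{-2}$ and $e^{2B}$ yield A2 and the Sobolev regularity A3, while periodicity and positivity of $\mu$ give aperiodicity and irreducibility (A1). For A4, the uniform ellipticity of $L$ combined with smoothness of its coefficients implies, via classical parabolic regularity/Aronson-type bounds on the heat kernel of $\partial_t-L$, that $p(1,\cdot,\cdot)$ is continuous and strictly positive on the compact $\mathbb{T}^d\times\mathbb{T}^d$, and hence bounded above and below.

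Next, a direct integration-by-parts check using the explicit $\mu$ shows that $L$ is symmetric on $L^2(\mu)$, so $P=e^{L}$ is self-adjoint on $L^2(\mu)$ and its singular value decomposition coincides with its spectral decomposition; in particular one may take $u_k=v_k$ and $\lambda_k=e^{\nu_k}$, where $0=\nu_0>\nu_1\geq\nu_2\geq\dots$ are the eigenvalues of $L$. Weyl's law for self-adjoint elliptic operators on closed $d$-manifolds (in the form already cited in the introduction) gives $|\nu_k|\asymp k^{2/d}$, hence $\lambda_k\lesssim \exp(-c\,k^{2/d})$, which is A5. Moreover $\nu_0=0$ is simple and isolated (by connectedness of $\mathbb{T}^d$ and the Poincaré inequality for uniformly elliptic operators), so $\nu_1<0$ and $\gamma=1-e^{\nu_1}>0$, giving A6.

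The main obstacle is A7, which I would attack by elliptic regularity bootstrapping. Since $Lu_k=\nu_k u_k$ and $L$ has smooth coefficients, the standard $L^2$ elliptic estimate iterated $\lceil s/2\rceil$ times produces
\[
\|u_k\|_{H^s}\;\lesssim\;(1+|\nu_k|)^{s/2}\|u_k\|_{L^2(\mu)}\;\lesssim\;(1+k^{2/d})^{s/2},
\]
uniformly in $k$; the delicate bookkeeping here is to make the implicit constants independent of $k$, which is achieved by writing $L=\frac{\sigma^2}{2}\Delta+b\cdot\nabla$ and applying standard Schauder/Sobolev estimates to $(\Delta u_k)$ in terms of $(\nu_k u_k,\nabla u_k,u_k)$, then interpolating between integer indices. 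Plugging this bound into A7 yields
\[
\sum_{k}\lambda_k^{2}\bigl(\|u_k\|_{H^s}^{2}+\|v_k\|_{H^s}^{2}\bigr)\;\lesssim\;\sum_{k\geq 0}e^{-2ck^{2/d}}(1+k^{2/d})^{s},
\]
and the series converges because the exponential factor dominates any polynomial. This closes all seven assumptions and proves $p\in\mathcal{M}(s)$.
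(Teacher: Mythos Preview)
Your proposal is correct and follows essentially the same route as the paper: verify \textbf{A1}--\textbf{A7} one by one via the explicit invariant density furnished by reversibility, Aronson/heat-kernel bounds for \textbf{A4}, Weyl's law for \textbf{A5}, a spectral-gap argument for \textbf{A6}, and elliptic-regularity bootstrapping on the eigenfunction equation $Lu_k=\nu_k u_k$ to control $\|u_k\|_{H^s}$ polynomially in $k$ for \textbf{A7}. The only cosmetic differences are the specific sources invoked (the paper cites Kent for $\mu$, Norris for the heat kernel, Ivrii for Weyl's law with rough coefficients, and Nickl--Ray for the $H^t\!\to\!H^{t-2}$ mapping property of $L^{-1}$, and derives \textbf{A6} from the Doeblin-type lower bound on $p$ rather than the Poincar\'e inequality), but the logical skeleton is identical.
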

The proof of Lemma \ref{diffusions} follows after an application of Weyl's law for operators with non-smooth coefficients due to Ivrii \cite{Ivrii2000} and by using PDE results for elliptic operators  from a recent article by Nickl and Ray \cite{NicklRay} and is presented in the appendix. The proof relies crucially on reversibility of the diffusion process, which allows to relate the singular triplets of the infinitesimal generator $L$ to the singular triplets of $P$.

\subsection{Construction of the estimator}
Here we describe how to obtain estimators for $p$ and $P$ given observations $(X_i)_{0 \leq i \leq n}$,  using a Galerkin approach. This method has also been employed for estimating the drift and volatility functions in a scalar diffusion model in the seminal paper by Gobet et al. \cite{GobetHoffmannReiss}  and the first part of our construction is closely related. \\ 
Instead of estimating $p$ in the functional space, the Galerkin approach estimates the action of $P$ on a suitable approximation space 
and we obtain plug-in estimators for $p$ and $P$. 
%
Working with $P$ 
instead of $p$ is advantageous because we can fully use its low-rank nature.
We construct our estimator as a modified version of the estimator described by Gobet et al. \cite{GobetHoffmannReiss}, adjusted to the non-reversible case: \\
For a periodised Daubechies wavelet basis $\{\Psi_{j,k} \}$ of $L^2(\mathbb{T})$ with regularity greater than $s$ (see e.g. section 4.3. in \cite{GineNickl16} for a construction), extend this basis to a tensor wavelet basis of $L^2(\mathbb{T}^d)$ (see again section 4.3. in \cite{GineNickl16}) given by
$$ \{ \Psi_{j,k}, ~~~k=0, \dots, \max (2^{jd}-1, 0), ~j \in \{-1\} \cup \mathbb{N}_0 \}. $$
 For convenience, we denote this basis $\{\Psi_\lambda\}$ where $\lambda=(j,k)$ is a multi index. 
In the following $V_J$ denotes the linear span of wavelets up to resolution level $J$,
\begin{align*}
V_J:= \text{span}\left\{\Psi_\lambda, ~|\lambda|=|(j,k)|:=j \leq J\right\},
\end{align*}
and we denote by $\textbf{V}_J$ the corresponding space of wavelet coefficients. 
The dimension of $V_J$ is bounded by $C 2^{Jd}$. Moreover, the $\| \cdot \|_{H^s}$-norms are equivalently defined through the decay of wavelet coefficients in this basis, see for instance p. 370 in \cite{GineNickl16}.  
As in Gobet et al. \cite{GobetHoffmannReiss}, we will use bold letters for the coefficient expansions in the wavelet basis $\left(\Psi_\lambda\right)$ of functions and operators in and on $L^2$. These denote vector and matrix like elements. The corresponding functions and operators - which do not depend on the basis - are in italic. In the case of vectors or matrix elements whose coefficients are only defined for $\abs{\lambda}\leq J$, we will sometimes consider them as elements in the whole sequence space. This is done through setting the undefined coefficients to zero.\\  
	Let now $J$ be a resolution level which we will choose later. Following Gobet et al. \cite{GobetHoffmannReiss} we construct a first estimator
	\begin{equation*}
	\left (\hat{\textbf{R}}_{J} \right )_{\lambda, \lambda'}=\frac{1}{n} \sum_{i=0}^{n-1} \Psi_\lambda\left(X_{i}\right)\Psi_{\lambda '}\left(X_{i+1}\right)~~~\text{for}~ |\lambda| \leq J, ~|\lambda'| \leq J.
	\end{equation*}
	The ergodic theorem implies that each of these coefficients converges almost surely to its expectation, 
	\begin{equation*} \mathbb{E}\left[\Psi_\lambda\left(X_0\right)\Psi_{\lambda'}\left(X_1\right)\right]
	=\langle \Psi_\lambda, P \Psi_{\lambda'} \rangle_\mu.
	\end{equation*}
	We thus also introduce $\textbf{R}_J$ which is defined as the expectation of $\hat{\textbf{R}}_J$, i.e.
	\begin{equation*}
	\left(\textbf{R}_J\right)_{\lambda, \lambda'}=\langle \Psi_\lambda, P\Psi_{\lambda'} \rangle_\mu ~~~~\text{for}~| \lambda| \leq J, ~|\lambda'| \leq J.
	\end{equation*}
	As $\overline{\cup_{J\in \mathbb{N}} V_J}=L^2$, we can define $\textbf{R}$, the limit of $\textbf{R}_J$ (with respect to the Hilbert--Schmidt norm). 
	Note that $\textbf{R}$ is defined through the $L^2(\mu)$-inner product
	and therefore, in  general,
	$$\textbf{R}\neq \textbf{P}:=(\langle \Psi_{\lambda}, P \Psi_{\lambda '} \rangle)_{\lambda, \lambda'}.$$ 
	We need to match the scalar products to estimate $P$. Let $G$ be the Gram operator with corresponding sequence representation $\textbf{G}=(\langle \Psi_{\lambda}, G \Psi_{\lambda '} \rangle)_{\lambda, \lambda'}.$
	$G$ is such that $\forall u, v\in L^2$ $\av{u, Gv}=\av{u,v}_\mu$, i.e. it corresponds to pointwise multiplication with $\mu$. Therefore,
	defining $\textbf{u}=(\langle u, \Psi_{\lambda} \rangle)_\lambda$ (and $\textbf{v}$ similarly), we have that
	$$\av{\textbf{u}, \textbf{R} \textbf{v}}=\av{u, P v}_\mu=\av{u, GP v}=\av{\textbf{u}, \textbf{G}\textbf{P} \textbf{v}}.$$
	If we estimate $\textbf{G}^{-1}$ we are thus able to estimate $\textbf{P}$.
	Following Gobet et al. \cite{GobetHoffmannReiss}, we define $$\left(\textbf{G}_J\right)_{\lambda, \lambda'}:= \av{\Psi_\lambda, \Psi_{\lambda'}}_\mu ~~~~\text{for}~~~~ {|\lambda|}\leq J, {|\lambda'|}\leq J$$ and $\hat{\textbf{G}}_J$ as:
	\begin{equation*}
	\left ( \hat{\textbf{G}}_{J}\right )_{\lambda, \lambda'}= \frac{1}{n+1}\sum_{i=0}^{n} \Psi_\lambda\left(X_i\right)\Psi_{\lambda'}\left(X_i\right)~~~\text{for}~ |\lambda| \leq J, ~|\lambda'| \leq J.
	\end{equation*}
	\\
	From here on, our approach differs from that in Gobet et al.  \cite{GobetHoffmannReiss}. In their (reversible) setting, recovering the first non-trivial eigenpair is sufficient, as the drift and volatility functions are identified in terms of this eigenpair and the invariant measure. \\
	Since our objective is to estimate $p$ and $P$ we have to consider $\emph{all}$ singular triplets instead. By assumption $\textbf{A5}$ $P$ has approximately low rank and hence $\textbf{R}_J$, the matrix of projected coefficients of $GP$, is an approximately low rank matrix.  For this reason we use the usual scheme for estimating low rank matrices, see for instance \cite{YuanEkiciLuMonteiro07, Klopp11,BuneaSheWegkamp11, KoltchinskiiLouniciTsybakov11} and hard threshold the singular values of  $\hat{\textbf{R}}_J$. This yields which singular triplets should be discarded in a data driven way. 
	\\We denote the SVD of $\hat{\textbf{R}}_J $ by $$\hat{\textbf{R}}_J= \sum \hat{\lambda}_k \hat{\textbf{u}}_k \hat{\textbf{v}}_k^T,$$
	where $\hat \lambda_k$ denotes the $k$-th singular value of $\hat{\textbf{R}}_J$ and $\hat{\textbf{u}}_k$ and  $\hat{\textbf{v}}_k$ the corresponding singular vectors. We define the spectral hard threshold estimator at level $\alpha$, $\tilde{\textbf{R}}_J=\tilde{\textbf{R}}_J(\alpha)$ as, 
	\begin{equation} \label{Estimator Hard thresh}
	\tilde{\textbf{R}}_J:=  \sum \hat{\lambda}_k \mathbf{1} \left(|\hat{\lambda}_k |> \alpha\right)  \hat{\textbf{u}}_k \hat{\textbf{v}}_k^T.
	\end{equation}
	Finally, we define the estimator for the action of $P$ on $V_J$ as \begin{equation} \label{EstimatorP} \tilde{\textbf{P}}_J:=\hat{\textbf{G}}_J^{-1}\tilde{\textbf{R}}_J.\end{equation}
For $f \in L^2$, we have, in a $L^2$-sense, the relation
	\begin{equation*}
	Pf(x)=\sum_\lambda ({\textbf{P}} \textbf{f} )_\lambda \Psi_{\lambda}(x), 
	\end{equation*}
	and hence we estimate $P$ by $\tilde P$, which we define as  \begin{equation} \label{Estimator P} \tilde{P}f(x):=\sum_{|\lambda|\leq J} (\tilde{{\textbf{P}}}_J \textbf{f} )_\lambda \Psi_{\lambda}(x). \end{equation}
	This also yields an estimator for $p$ by plug-in, given by
	\begin{equation}
	\label{Estimatorp}
	\tilde{p}(x,y):=\sum_{|\lambda| \leq J,~|\lambda'| \leq J} \left (\tilde{\textbf{P}}_J \right )_{\lambda, \lambda'}   \Psi_{\lambda}(x) \Psi_{\lambda'}(y).
	\end{equation}
	We finally choose for a large enough constant $C > 0$ and for $\lceil \cdot \rceil$ denoting the ceiling function,
	\begin{equation} \label{Resolution}
	J = \left\lceil\log_2 (n^{\frac{1}{2s+d}} \log(n)^{-\frac{d}{4s+2d}})\right\rceil ~~~\text{and}~~~\alpha=C\sqrt{\frac{2^{Jd}}{n}},
	\end{equation}
	to obtain the theoretical results in Theorem \ref{Result1 P} in the next section. 

	\subsection{Convergence rates}
	%
	%
	We now give our main theoretical result for the estimator  
	$\tilde p$ of the transition density $p$ constructed in 
	\eqref{Estimatorp}. 
	The upper bounds attained in $L^2$-loss for estimating $p$ match the lower bounds and are therefore minimax optimal, showing that 
	the logarithmic factors are inherent in the information-geometric structure of the problem. \\  
	Comparing our result to the standard Markov chain case  without singular value decay  where the $L^2$ minimax rates are $n^{-\frac{s}{2s+2d}}$ (e.g. \cite{Clemencon, LacourSPA}), we see that the effect of the dimension on the rate improves, up to the logarithmic factor, from $2d$ to $d$. 
	\begin{theorem}
		\label{Result1 P}
		Suppose that we observe $\left(X_i\right)_{0\leq i\leq n}$ drawn from a stationary Markov Chain with $p \in \mathcal{M}(s)$ for some $s \geq d$. Then, for the estimator $\tilde{p}$ defined in \eqref{Estimatorp} and a constant $C > 0$ we have, for $n$ sufficiently large enough, with probability at least 
		$1-6\exp\left(-n^{\frac{d}{2s+d}} \log(n)^{-\frac{d^2}{4s+2d}}\right)$ that
		\begin{equation}
		\label{Result1 p cvgc}
		\norm{p-\tilde{p}}_{L^2}\leq C \log\left(n\right)^{\frac{d}{2}\frac{s}{2s+d}} n^{-\frac{s}{2s+d}}.
		\end{equation}
		Moreover, the following minimax lower bound holds: for constants $c, p_0 > 0$,
		\begin{equation} \label{Result lower bound}
		\inf_{\hat{p}}\sup_{p\in \mathcal{M}(s)} \mathbb{P}_p\left(\norm{p-\hat{p}}_{L^2}\geq c \log\left(n\right)^{\frac{d}{2}\frac{s}{2s+d}} n^{-\frac{s}{2s+d}} \right)\geq p_0>0.
		\end{equation}
	\end{theorem}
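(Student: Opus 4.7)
The plan is to use the wavelet isometry of Lemma \ref{IsoPp} to reduce the $L^2$-error of $\tilde p$ to a Hilbert--Schmidt error between (infinite) coefficient matrices, and then to split bias and variance at resolution level $J$:
\begin{equation*}
\norm{p-\tilde p}_{L^2}^2 \;=\; \norm{\textbf{P}_J-\tilde{\textbf{P}}_J}_F^2 \;+\;\sum_{|\lambda|>J \text{ or } |\lambda'|>J}\textbf{P}_{\lambda,\lambda'}^2.
\end{equation*}
Writing each $\textbf{P}_{\lambda,\lambda'}$ through the SVD of $P$ and combining assumption \textbf{A7} with the wavelet Jackson inequality bounds the truncation term by a constant times $2^{-2Js}$, which for the choice of $J$ in \eqref{Resolution} is already at the target rate $n^{-2s/(2s+d)}\log(n)^{ds/(2s+d)}$.

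For the in-$V_J$ part I would use the Galerkin identity $\tilde{\textbf{P}}_J-\textbf{P}_J = \hat{\textbf{G}}_J^{-1}(\tilde{\textbf{R}}_J-\textbf{R}_J)+(\hat{\textbf{G}}_J^{-1}-\textbf{G}_J^{-1})\textbf{R}_J$ and reduce everything to operator-norm concentration of the two empirical matrices. A matrix Bernstein inequality for sums along a geometrically ergodic Markov chain (leveraging the spectral gap \textbf{A6} through a blocking argument, with $\norm{\Psi_\lambda}_{L^\infty}\lesssim 2^{Jd/2}$ playing the role of the bounded-increment constant) should give
\begin{equation*}
\norm{\hat{\textbf{R}}_J-\textbf{R}_J}_{\infty}\;\lesssim\;\sqrt{\tfrac{2^{Jd}}{n}}\;\asymp\;\alpha,\qquad \norm{\hat{\textbf{G}}_J-\textbf{G}_J}_{\infty}\;\lesssim\;\sqrt{\tfrac{2^{Jd}\log n}{n}},
\end{equation*}
each with the exponentially small failure probability stated in \eqref{Result1 p cvgc}. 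Combined with \textbf{A2}, the second bound makes $\hat{\textbf{G}}_J$ invertible with $\norm{\hat{\textbf{G}}_J^{-1}}_{\infty}=O(1)$. On the event $\norm{\hat{\textbf{R}}_J-\textbf{R}_J}_{\infty}\leq\alpha/2$, a standard hard-thresholding lemma (in the spirit of Koltchinskii--Lounici--Tsybakov) then yields
\begin{equation*}
\norm{\tilde{\textbf{R}}_J-\textbf{R}_J}_F^2 \;\lesssim\; K(\alpha)\,\alpha^2 \;+\; \sum_{k>K(\alpha)}\sigma_k(\textbf{R}_J)^2,
\end{equation*}
where $K(\alpha)$ counts singular values of $\textbf{R}_J$ above a constant multiple of $\alpha$. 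Assumption \textbf{A5} forces $K(\alpha)\lesssim(\log n)^{d/2}$ and makes the residual sum exponentially small, so the variance matches $(\log n)^{d/2}\cdot 2^{Jd}/n$, which is again the target rate for our $J$.

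For the lower bound I would set up a Varshamov--Gilbert/Fano hypothesis family inside $\mathcal{M}(s)$. Starting from a well-conditioned reference $p_0\in\mathcal{M}(s)$ (for instance the transition density of a small-volatility periodised Brownian motion, whose operator $P_0$ is strictly positive with a clean spectral gap), consider rank-$K$ perturbations
\begin{equation*}
P_\epsilon \;=\; P_0 \;+\; \delta\sum_{k=1}^K \epsilon_k\,\phi_k\otimes\psi_k,
\end{equation*}
where $K\asymp(\log n)^{d/2}$, the $\phi_k,\psi_k$ are smooth bumps at scale $h\asymp 2^{-J}$ with disjoint supports chosen so that the $H^s$-norms and the singular-value decay of $P_0$ are preserved for sufficiently small $\delta$, and $\epsilon\in\{\pm 1\}^{K}$ is a Varshamov--Gilbert sign vector. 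The $L^2$-separation between two such hypotheses is of order $\delta\sqrt{K}$, while the KL divergence between the induced stationary laws of $(X_0,\dots,X_n)$ is bounded, via \textbf{A4} and chi-square domination, by a constant times $n\,\norm{p_\epsilon-p_{\epsilon'}}_{L^2}^2\lesssim n\delta^2$. Calibrating $\delta$ just below this noise level and applying Fano over the $2^{K}$-sized packing produces the advertised rate, the $(\log n)^{ds/(2s+d)/2}$ factor emerging precisely from the rank budget $K\asymp(\log n)^{d/2}$ dictated by \textbf{A5}.

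\textbf{The main obstacle} I anticipate is the matrix Bernstein step: producing the correct operator-norm deviation $\sqrt{2^{Jd}/n}$ for a matrix of size $\asymp 2^{Jd}$ under non-i.i.d.\ (Markov) sampling, with an exponentially small failure probability matching the $\exp(-n^{d/(2s+d)}\log(n)^{-d^2/(4s+2d)})$ factor in \eqref{Result1 p cvgc}. All the other ingredients---the isometry, the Jackson truncation, the Galerkin identity, and the hard-thresholding lemma---are essentially off the shelf once this concentration is in place at the correct scale.
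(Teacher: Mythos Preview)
Your upper-bound outline is broadly right, but the step you flag as the main obstacle (the $\hat{\textbf{R}}_J$ concentration) is actually the easy one; the genuine difficulties lie elsewhere.

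\textbf{The Gram-correction term.} Your identity $\tilde{\textbf{P}}_J-\textbf{P}_J = \hat{\textbf{G}}_J^{-1}(\tilde{\textbf{R}}_J-\textbf{R}_J)+(\hat{\textbf{G}}_J^{-1}-\textbf{G}_J^{-1})\textbf{R}_J$ tacitly assumes $\textbf{G}_J^{-1}\textbf{R}_J=\textbf{P}_J$; in fact $\textbf{G}_J^{-1}\textbf{R}_J=\pi_J^\mu P\pi_J^\lambda$, so an extra Galerkin-bias term appears (it is $O(2^{-Js})$ and harmless). The real problem is that $(\hat{\textbf{G}}_J^{-1}-\textbf{G}_J^{-1})\textbf{R}_J$ cannot be controlled at the target rate from a bare operator-norm bound on $\hat{\textbf{G}}_J-\textbf{G}_J$. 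For unit $u,v\in V_J$ the variance of $u(X_0)v(X_0)$ is of order $2^{Jd}$ (unlike $u(X_0)v(X_1)$, which is $O(1)$), so at the failure probability $\exp(-2^{Jd})$ demanded by the theorem the best achievable bound is $\norm{\hat{\textbf{G}}_J-\textbf{G}_J}_\infty\lesssim\sqrt{2^{2Jd}/n}$, not your $\sqrt{2^{Jd}\log n/n}$. Multiplying by $\norm{\textbf{R}_J}_F=O(1)$ then gives $\sqrt{2^{2Jd}/n}$, which exceeds $n^{-s/(2s+d)}$ for all $s\geq d$. The paper resolves this by proving a \emph{separate} concentration bound for the product $(\hat{\textbf{G}}_J-\textbf{G}_J)\textbf{P}_J$: since $\norm{P_J u}_{L^\infty}\lesssim 1$ for $\norm{u}_{L^2}\leq 1$, the relevant variance drops back to $O(1)$, yielding $\norm{(\hat{\textbf{G}}_J-\textbf{G}_J)\textbf{P}_J}_\infty\lesssim\sqrt{2^{Jd}/n}$, and then the effective rank $r\asymp(\log n)^{d/2}$ converts this to the correct Frobenius rate.

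\textbf{The lower bound.} Your Varshamov--Gilbert construction with $K\asymp(\log n)^{d/2}$ fixed rank-one bumps and sign vectors $\epsilon\in\{\pm1\}^K$ does not give the claimed rate. The packing satisfies only $\log|M|\asymp K$, while the Kullback--Leibler divergence to the reference is $n\delta^2 K$ (not $n\delta^2$ as you write, since $\norm{p_\epsilon-p_0}_{L^2}^2=\delta^2 K$). Fano then forces $\delta\lesssim n^{-1/2}$, and the resulting separation $\delta\sqrt{K}\lesssim(\log n)^{d/4}n^{-1/2}$ is strictly smaller than $n^{-s/(2s+d)}$ for every $s\geq d\geq 1$. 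Fixing the singular directions and flipping only signs throws away the $2^{Jd}$ ambient degrees of freedom that the smoothness scale provides. The paper instead takes $P_S=\pi_0+\eta\varepsilon_n\pi_S$ with $S$ ranging over the Grassmannian of $K$-dimensional subspaces of a $\asymp 2^{Jd}$-dimensional wavelet space; Pajor's entropy bound gives $\log|M|\asymp K\cdot 2^{Jd}$, which is precisely what allows $\varepsilon_n\asymp\sqrt{2^{Jd}/n}$ and makes the separation $\varepsilon_n\sqrt{K}$ match the upper bound.
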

	\noindent 
	In addition, by isometry this implies the same upper and lower bounds for estimating $P$ with respect to the Hilbert-Schmidt norm. Slightly adjusting the construction of $\tilde p$ by setting $\tilde p$ to zero if $\hat{\textbf{G}}_J$ has not sufficiently large smallest singular value, it is also possible to obtain a version of the bound \eqref{Result1 p cvgc} in expectation. \\
	The proof of the upper bounds for $\tilde{p}$ in \eqref{Result1 p cvgc} is based on an application of concentration inequalities for Markov chains by Jiang et al. \cite{JiangSunFan18}, combined with an $\epsilon$-net argument to obtain tight bounds for the spectral norm rate of $\hat{\textbf{R}}_J$ and an application of the general theory for rank penalized estimators by Klopp \cite{Klopp11}.  \\
	The lower bound \eqref{Result lower bound} requires different arguments compared to the case without decay. There an application of Assouad's Lemma and flipping coefficients suffices \cite{Clemencon}. Instead, here we adapt an idea by Koltchinskii and Xia \cite{KoltchinskiXia} to our nonparametric setting by using projection matrices to infuse the low rank structure of $P$.
	
	\noindent 
	Additionally, the rank of $\tilde{\textbf{P}}$ in \eqref{EstimatorP} is with high probability bounded by approximately $\log(n)^{\frac{d}{2}}$, implying the same low rank structure for $\tilde P$.  This justifies the approach of practitioners such as \cite{Chodera, Maggioni,  Koltai, SchwantesGibbonPande} to dismiss most singular triplets in their analysis.
	\begin{lemma} \label{Corollary low rank}
		Under the conditions of Theorem \ref{Result1 P}, we have for the estimator $\tilde{P}$ given in \eqref{Estimator P}, for some constant $C>0$, that, on the same event of probability at least $1-6\exp\left(-n^{\frac{d}{2s+d}} \log(n)^{-\frac{d^2}{4s+2d}}\right)$ on which \eqref{Result1 p cvgc} holds,
		\begin{equation}
		\rank(\tilde P) \leq C \log(n)^{\frac{d}{2}}.
		\end{equation}
	\end{lemma}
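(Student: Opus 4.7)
The plan is to show that $\rank(\tilde P) \leq \rank(\tilde{\textbf{P}}_J) = \rank(\tilde{\textbf{R}}_J) = \#\{k : \hat\lambda_k > \alpha\}$, and then, via Weyl's inequality and the exponential singular value decay \textbf{A5}, to argue that at most $O(\log(n)^{d/2})$ indices $k$ can satisfy the last condition. On the high-probability event from the proof of Theorem \ref{Result1 P}, $\hat{\textbf{G}}_J$ is invertible (this invertibility is already needed there), so multiplication by $\hat{\textbf{G}}_J^{-1}$ preserves rank, which yields the first equality; the third equality is then immediate from the hard-thresholding construction in \eqref{Estimator Hard thresh}, while the first inequality follows since $\tilde P$ takes values in $V_J$ and is represented there by $\tilde{\textbf{P}}_J$.

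Next I would use the Weyl inequality $|\hat\lambda_k - \sigma_k(\textbf{R}_J)| \leq \|\hat{\textbf{R}}_J - \textbf{R}_J\|_\infty$ together with the spectral-norm deviation bound $\|\hat{\textbf{R}}_J - \textbf{R}_J\|_\infty \lesssim \sqrt{2^{Jd}/n} \asymp \alpha$. The latter is exactly the estimate already produced in the proof of Theorem \ref{Result1 P} via the concentration inequalities of Jiang et al.\ \cite{JiangSunFan18} combined with the $\epsilon$-net argument described after Theorem \ref{Result1 P}; for the constant in $\alpha = C\sqrt{2^{Jd}/n}$ chosen at least twice the implicit constant appearing there, this forces $\sigma_k(\textbf{R}_J) > \alpha/2$ whenever $\hat\lambda_k > \alpha$. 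Since $\textbf{R}_J$ is the principal compression to $V_J$ of the infinite matrix of $GP$ in the $L^2$-orthonormal basis $\{\Psi_\lambda\}$, the min-max principle for singular values gives $\sigma_k(\textbf{R}_J) \leq \sigma_k(GP)$ on $L^2$. Combining the norm equivalence between $L^2$ and $L^2(\mu)$ from \textbf{A2} with the boundedness of $G$ then yields $\sigma_k(GP) \leq C'\lambda_k$, where $\lambda_k$ are the singular values of $P$ on $L^2(\mu)$ from \eqref{Assumptions Eigendecomp}. By \textbf{A5}, $\lambda_k \leq C_3 \exp(-C_4 k^{2/d})$, so $\sigma_k(\textbf{R}_J) > \alpha/2$ forces $C_3 C' \exp(-C_4 k^{2/d}) > \alpha/2 \asymp n^{-s/(2s+d)} \log(n)^{-d^2/(8s+4d)}$, which after taking logarithms and rearranging gives $k \leq C'' \log(n)^{d/2}$ as desired.

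The main technical obstacle is the spectral-norm deviation bound $\|\hat{\textbf{R}}_J - \textbf{R}_J\|_\infty \lesssim \sqrt{2^{Jd}/n}$, but this is already central to the proof of Theorem \ref{Result1 P} and is therefore available on the same high-probability event at no additional stochastic cost; the remainder is a short deterministic argument combining the min-max principle, \textbf{A2} and \textbf{A5}. A minor bookkeeping point is to make sure the tuning constant $C$ in $\alpha$ is taken sufficiently large so that the threshold separates the perturbed and unperturbed singular values, but this is consistent with the choice already required in Theorem \ref{Result1 P}.
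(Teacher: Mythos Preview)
Your proof is correct and follows essentially the same route as the paper: reduce to bounding $\rank(\tilde{\textbf{R}}_J)$, use Weyl/Lidski together with the spectral-norm deviation bound on the event $\Omega$ to transfer the threshold condition $\hat\lambda_k>\alpha$ to $\sigma_k(\textbf{R}_J)\gtrsim\alpha$, and then invoke the exponential decay from \textbf{A5} to force $k\lesssim\log(n)^{d/2}$. The only minor difference is in how $\sigma_k(\textbf{R}_J)$ is controlled: the paper reuses the Frobenius tail bound $\sigma_k(\textbf{R}_J)^2\le\|\textbf{R}_J-\textbf{R}_{k,J}\|_F^2\lesssim\sum_{l\ge k}\lambda_l^2$ already established in the proof of \eqref{Result1 p cvgc}, whereas you bound $\sigma_k(\textbf{R}_J)\le\sigma_k(GP)\le C'\lambda_k$ directly via the min-max principle for compressions and the $L^2$--$L^2(\mu)$ norm equivalence from \textbf{A2}; both yield the same exponential estimate.
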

	\begin{remark}[Other basis functions] \label{Remark fourier}
	The proof of Theorem \ref{Result1 P} requires the Jackson  inequality and the bound  $\| v \|_{L^\infty} \leq C\sqrt{\dim(V_J)} $ for any $v \in V_J$ satisfying $\| v \|_{L^2} \leq 1$. Thus, arguing as in Remark 5 in Chorowski and Trabs \cite{ChorowskyTrabs} the conclusions of Theorem \ref{Result1 P} remain valid for the trigonometric and the B-spline basis if one strengthens the assumptions $\mathbf{A3}$ and $\mathbf{A7}$ to $\|\mu\|_{C^s} \leq c$ and $\sum \lambda_k^2 (\|u_k\|_{C^s}^2+\|v_k\|_{C^s}^2) \leq C$ for some constants $c,C>0$.
\end{remark}
	\begin{remark}[Other low-rank algorithms] Instead of using spectral hard thresholding to improve the estimation performance of $P$, one can also use soft thresholding via the matrix lasso \cite{YuanEkiciLuMonteiro07,KoltchinskiiLouniciTsybakov11} or, for a sufficiently large constant $C$, keep the first $C\log(n)^{\frac{d}{2}}$ singular triplets of $\hat{\textbf{R}}_J$. The upper bound in Theorem \ref{Result1 P} and Lemma \ref{Corollary low rank} remain valid with these adjustments, which follows by appealing to arguments from \cite{KoltchinskiiLouniciTsybakov11}. 
	\end{remark} 
	\begin{remark} [ From $P$ to $P_\tau$] \label{Remark Pt} In molecular dynamics it is often desired to obtain an estimate for the transition operator, $$P_\tau f(x):=\mathbb{E} [f(X_\tau)|X_0=x],~~~~f \in L^2(\mu),$$
		and its transition density $p_\tau$, $\tau >1$, for example for simulating or visualizing the Markov chain at a coarser timescale. \\
		Given the estimator $\tilde{\textbf{P}}$ in \eqref{EstimatorP} and $\tau \in \mathbb{N}$ it is possible to obtain an estimator for $p_\tau$ as follows: if $\tau \leq C \log(n)$ we use the plug-in estimator $(\tilde{\textbf{P}})^{\tau}$ and the induced estimator for $p_\tau$ in \eqref{Estimatorp} and we are able to obtain similar upper bounds as in  Theorem \ref{Result1 P} (up to logarithmic factors).\\ If $\tau > C \log(n)$ it suffices to estimate the invariant density $\mu$ as in this case all singular values of $P_\tau$ except the first one are of smaller order than $1/n$. 
	\end{remark}  
	\begin{remark}[Adaptivity]
		The correct choice of $J$ and thus $\alpha$ depends on the smoothness parameter $s$. In practice $s$ is unknown, but one can use for instance Lepski's method to adapt to $s$. The proof that this works and that the upper bound in Theorem \ref{Result1 P} remains valid for this estimator is a straightforward adaptation of results of Chorowski and Trabs \cite{ChorowskyTrabs}. We also refer to Chapter 8 in \cite{GineNickl16} for further details how to prove convergence rates when using Lepski's method. \\ 
	However, it is well known that the performance of Lepski's method on finite data sets is also highly dependent on the choice of further tuning constants in its definition. From a theoretical perspective there have been two attempts to deal with this issue: the concept of minimal penalty \cite{LacourMassart15} and the bootstrap \cite{ChernozhukovChetverikovKato14AOS}. 
	Another practical possibility for tuning parameter selection would be (generalized) cross-validation \cite{WahbaWold75,Li87,CravenWahba79}. However, the theoretical properties of these methods in our 
non-i.i.d. setting are not clear and further research is needed.  

	\end{remark}
	\noindent
	\noindent
	
	\subsection{Numerical Experiment} \label{Numerical Experiments}
	In this section we illustrate and corroborate our theoretical findings with simulated data from a periodized, one-dimensional Ornstein-Uhlenbeck process. 
	The Ornstein-Uhlenbeck process is given by
	\begin{equation} dY_t=-\theta Y_t dt+ \sigma dW_t, ~~~~t \geq 0,\label{OhrnsteinUhlenbeck}
	\end{equation}

We generate observations at discrete time steps $Y_0, Y_1, \dots, Y_{n}$, simulating exactly from the bivariate Gaussian transition density. Afterwards, we periodize the observations with period $2\pi$. Thus,  applying Lemma \ref{diffusions} we obtain that the periodized process fulfills assumptions \textbf{A1}-\textbf{A7} for arbitrary but fixed $s \in \mathbb{N}.$

 The transition density of the periodized process is then given by  
	\begin{equation*}
	p(x,y)=\sum_{y'=y+2 \pi \mathbb{Z}} 
	\frac{1}{\sqrt{\pi \sigma^2(1-e^{-2 \theta })/\theta}} e^{ -\frac{\theta (y-xe^{-\theta })^2}{\sigma^2(1-e^{-2\theta })}}.
	\end{equation*}
For visualization and computing error bounds, we use $y+2\pi i, ~i \in \{-10, \dots, 10\}$, as the other summands above are negligible. 
	\begin{figure}[H]%
	
		\vspace{0cm}
		\centering
	
		\includegraphics[width=12cm]{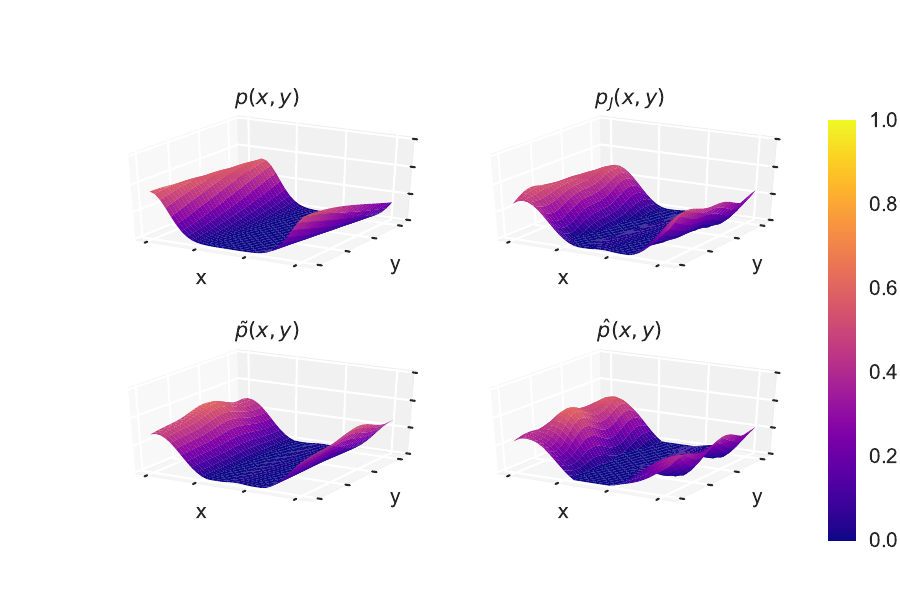} %
		\caption{In clockwise order starting in the upper left corner: Transition-density $p(x,y)$ for the periodized one-dimensional Ornstein-Uhlenbeck process \eqref{OhrnsteinUhlenbeck} with parameters $\theta=2$, $\sigma=2$ and plotted in the region $[0, 2 \pi ]^2$; transition density projected on the approximation space of the first  $J=4$ trigonometric basis functions in each direction; non-thresholded estimator $\hat{p}$ for $n=1000$, $X_0=0.5$ and $J=3$; thresholded estimator $\tilde{p}$ in the same observation scheme with $J=4$ and threshold level $\alpha=0.2$.}
				\label{fig1}
	\end{figure}
	As basis functions, following Remark \ref{Remark fourier}, we use the trigonometric basis on the interval $[0, 2 \pi]$, given by
	\begin{equation*}
	\Psi_k(x)=\begin{cases}
	\frac{1}{\sqrt{2 \pi }}~~~~~~~~~~~~~~~~~~~~~~~~~k=0 \\
	\frac{1}{\sqrt{\pi}}\cos\left ( \frac{x k}{2}  \right )  ~~~~~~~~~~~~~~~k=2i, ~i \in \mathbb{N} \\
	\frac{1}{\sqrt{\pi }}\sin\left ( \frac{x(k+1)}{2}\right )~~~~~~~~~~k=2i-1, ~i \in \mathbb{N}.
	\end{cases}
	\end{equation*}
	We compute the average Euclidean loss for various numbers of observations, resolution levels and thresholding levels, which are displayed in Table \ref{tab:table1}. For the calculation of the Euclidean losses we calculate the first $900$ Fourier coefficients of $p$ and, afterwards, their Euclidean distance to the empirical ones. \\
	Our theoretical findings are supported by the numerical results in Table \ref{tab:table1}. For all number of observations, a hard thresholded estimator achieves better error bounds than non-thresholded estimators. Moreover, compared to the non-thresholded estimator the optimal resolution level $J$ increases, thus allowing the estimation of finer details of the transition densities. \\
	We also visualize this in Figure \ref{fig1} for 1000 observations. There we plot the non-thresholded estimator with smallest error and consequently resolution level $J=3$ against the best thresholded estimator where the optimal resolution level is $J=4$. \\ 
\begin{table}[h!]
	
	\begin{center}

		\begin{tabular}{l|c|r|l}
		\diagbox{\textbf{Parameters}}{\textbf{Observations}} 	 &1000  & 3000%
		  & 6000 \\ 
			\hline
			$J=3, \alpha=0$ & 0.5118 & 0.4722 & 0.4661\\ 
			$J=3, \alpha=0.2$ & 0.5480 &0.5452 & 0.5442 \\ 
			$J=3, \alpha=0.1$ & 0.5473 & 0.5445& 0.5431\\ 
			$J=4, \alpha=0$ &  0.6300 & 0.5659& 0.558\\ 
			$J=4, \alpha=0.2$ & \textbf{0.4701} & 0.4590& 0.4554 \\
			$J=4, \alpha=0.1 $ & 0.4746 & 0.4596 & 0.4547 \\ 
			$J=5, \alpha=0$ & 0.7124 &  0.5002 &  0.4346  \\
		$J=5, \alpha=0.02$ & 0.6351 & 0.4770 & {0.4176} \\
			$J=5, \alpha=0.03$ & 0.6293 & \textbf{0.4224} &  \textbf{0.3928} \\
			$J=5, \alpha=0.05$ & 0.6127& {0.4371} & {0.4094}   \\
			$J=5, \alpha=0.1$ & 0.5033& {0.4611}& 0.4557  \\
			$J=5, \alpha=0.2$ & 0.4965& 0.4631 &  0.4548 \\
			$J=6, \alpha=0$ & 1.0083& 0.6787& 0.6036  \\ 
			$J=6, \alpha=0.05$ & 0.7733 & 0.5834 & 0.5828 \\ 
			$J=6, \alpha=0.1$ & 0.6279 & 0.4783 & 0.4564 \\ 
			$J=6, \alpha=0.2$ & 0.5218 &0.4667 &0.4625  \\ 
	
		\end{tabular}
	~~\\~~  \\ 
			\caption{Average Euclidean loss for the Ornstein-Uhlenbeck process for various resolution and threshold levels. Here we denote by $J$ the number of Fourier coefficients that are computed in each direction. For each observation level we generate $100$ realizations of the estimators, compute the Euclidean loss and calculate the average. As parameters for the Ornstein-Uhlenbeck process we choose $\theta=2$ and $\sigma=2$.  }
					\label{tab:table1}
	\end{center}
\end{table}

\subsection{Discussion}
In this paper we have shown that it is possible to speed up the estimation rates of transition densities of Markov chains in the presence of additional approximately low-rank structure of the corresponding transition operator. We have proposed a new algorithm based on spectral thresholding of a Galerkin-type estimator of the transition operator and proved sharp minimax optimal convergence rates that improve the exponential dependence of the convergence rate on the dimensionality from $s/(2s+2d)$ to almost $s/(2s+d)$. Moreover, we have proven that for a large and natural class of Markov chains, periodized, low frequency observations of diffusion processes, our assumptions are fulfilled. 

Nevertheless, many open questions remain and we outline some possible future research avenues below. 

First of all, it would be interesting to investigate whether our assumptions, particularly exponential decay of the singular values, hold for other classes of Markov chains, e.g. non-reversible diffusion processes or difference equations \cite{Nelson90}  such as the GARCH(1)-model. For non-reversible diffusion processes it would be necessary to consider the singular values of the transition operator directly through the parabolic heat-equation as in the non-reversible case the singular values of $L$ can not be directly related to the singular values of $P$. In case of difference equations, it is known that they converge in the microscopic limit to solutions of diffusion processes \cite{Nelson90} and hence it seems plausible that also difference equations might fulfill our assumptions and, in particular, exponential decay of the singular values of the transition operator under mild assumptions. 

Moreover, the class of Markov chains parameterized through low frequency observations of diffusion processes forms a large and important subclass of $\mathcal{M}(s)$ and underlying diffusion dynamics may be commonly assumed in applications such as molecular dynamics \cite{Chodera, Maggioni,Koltai, MaggioniChem, Schutte,SchwantesGibbonPande,Shukla}. Hence, it would be interesting to investigate whether the bounds in Theorem \ref{Result1 P} are also sharp when  restricted to the  class of diffusion processes considered in Lemma \ref{diffusions}. In order to obtain a lower bound, it would likely be necessary to impose a prior on $(b, \sigma)$ to ensure that $P$ is generated from a diffusion process and then carefully study stability properties of the mapping $(b, \sigma) \rightarrow p$ and thus the (parabolic) heat equation. So far, a similar program has only been successfully  employed in simpler settings of elliptic PDE regression models, e.g. \cite{NicklvdGWang20}. 

Besides estimating the full transition operator $P$, it might also be of interest to estimate the first few $L^2(\mu)$-singular functions $u_k$ and $v_k$, for instance for visualization purposes. This can be done by plug-in and solving the generalized singular value problem $\tilde{\textbf{P}}_J \tilde{\textbf{u}}_{k,J}=\tilde{\lambda}_k \tilde{\textbf{v}}_{k,J}$, $ \textbf{u}_{k,J}, \textbf{v}_{k,J}  \in \textbf{V}_J, \|\tilde{\textbf{G}}_J^{1/2} \textbf{u}_{k, J}\|_2=\|\tilde{\textbf{G}}_J^{1/2} \textbf{v}_{k, J}\|_2=1$.  Combining the spectral norm bounds in Lemma \ref{Lemma Spectral norm bound} with perturbation bounds for generalized eigenvalue problems (e.g. section VI in \cite{StewartSun90}) it is, in principle, possible to obtain $L^2$-estimation bounds for $u_k$ and $v_k$. It is an interesting further research question to investigate whether the bounds obtained in this way are sharp and derive minimax lower bounds for generalized singular value problems in dependence on the smoothness of $\mu$ and $u_k$ and $v_k$ in our Markovian setting, thus extending results for PCA from the i.i.d. setting  \cite{CaiMaWu13,VuLei2013,Wahl21}.

	\section{Proofs}
	Throughout the results and proofs, the constants involved will be denoted by $C$ and $c$; we will not always keep track of them and they may change from equation to equation. However one can check that they can be bounded by functions of constants defining the model in \textbf{A1}-\textbf{A7}.
	\subsection{Upper bounds - proof of \eqref{Result1 p cvgc}}
	\subsubsection{Decomposing the error term}
	We first decompose the error term and then bound each term separately. We have that
	\begin{align}
	\| \tilde{\textbf{P}}_J-\textbf{P}\|_F 
	\leq &   \| \hat{\textbf{G}}_J^{-1} ( \tilde{\textbf{R}}_{J} - \textbf{R}_{r,J})\|_F+\|(\hat{ \textbf{G}}_J^{-1}-\textbf{G}_J^{-1})  \textbf{R}_{r,J}\|_F \notag \\  + & \|\textbf{G}_J^{-1} (\textbf{R}_{r,J}-\textbf{R}_J) \|_F+\| \textbf{G}_J^{-1} \textbf{R}_J-\textbf{P} \|_F \notag \\
	\leq & \| \hat{\textbf{G}}_J^{-1}\|_\infty  \| \tilde{\textbf{R}}_{J} - \textbf{R}_{J}\|_F+ (\|\textbf{G}_J^{-1}\|_\infty+\|\hat{\textbf{G}}_J^{-1}\|_\infty) \|\textbf{R}_{r,J}-\textbf{R}_J \|_F \notag \\ & +r^{1/2}\|(\hat{ \textbf{G}}_J^{-1}-\textbf{G}_J^{-1})  \textbf{R}_{r,J}\|_\infty+\| \textbf{G}_J^{-1} \textbf{R}_J-\textbf{P} \|_F \notag \\
	=:&I+II+III+IV, \label{Proof Main decomp}
	\end{align}
	where $\textbf{R}_{r,J}$ will denotes a rank-$r$ approximation of $\textbf{R}_{J}$ which we define below and where we used that $(\hat{ \textbf{G}}_J^{-1}-\textbf{G}_J^{-1})  \textbf{R}_{r,J}$ has rank at most $r$ to obtain the term III. We therefore have to take care of $4$ terms: Variance bounds in Frobenius norm (I), rank-$r$ approximation error (II), correction of the scalar product in spectral norm (III), and smoothness approximation error (IV). 
	\subsubsection{Bounding I - variance bounds in spectral and Frobenius norm}
	In this section we bound the first term $\| \hat{\textbf{G}}_J^{-1}\|_\infty  \| \tilde{\textbf{R}}_{J} - \textbf{R}_{J}\|_F$. We will first obtain a bound for $\|\tilde{\textbf{R}}_J-\textbf{R}_J\|_F$. In our proof, we follow the usual line of arguments from the low rank literature \cite{Klopp11,KoltchinskiiLouniciTsybakov11} and bound the spectral norm of $\hat{ \textbf{R}}_J-\textbf{R}_J$. Moreover, we also prove spectral norm bounds for  $ \hat{ \textbf{G}}_J-\textbf{G}_J$ and $ (\hat{ \textbf{G}}_J-\textbf{G}_J)\textbf{P}_J$, where $\textbf{P}_J$ denotes the restriction of $\textbf{P}$ to $\textbf{V}_J$. 	\begin{lemma}
		\label{Lemma Spectral norm bound}
		Assume $2^{3Jd}\leq cn$ for some small enough constant $c>0$. Then for constants $C, C', C''>0$ we have that
		\begin{align}
		& \mathbb{P} \left ( 	\norm{\hat{ \textbf{R}}_J-\textbf{R}_J}_\infty \leq C\sqrt{\frac{2^{Jd}}{n}} \right ) \geq 1- 2\exp\left(-2^{Jd} \right ),
		\label{Bound R_J spectral norm}
		\\
		& 	\mathbb{P} \left ( 	\norm{\hat{ \textbf{G}}_J-\textbf{G}_J}_\infty \leq C'\sqrt{\frac{2^{2Jd}}{n}} \right ) \geq 1- 2\exp\left(-2^{Jd} \right ),
		\label{Bound G_J spectral norm} \\
		&	\mathbb{P} \left ( 	\norm{(\hat{ \textbf{G}}_J-\textbf{G}_J) \textbf{P}_J }_\infty \leq  C''\sqrt{\frac{2^{Jd}}{n}} \right ) \geq  1-2\exp\left(-2^{Jd} \right ).
		\label{Bound G_JPspectral norm}
		\end{align}
	\end{lemma}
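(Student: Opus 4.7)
All three bounds share the same skeleton: reduce the spectral norm to a supremum of quadratic forms over the Euclidean unit sphere $S_J\subset \textbf{V}_J$ (of dimension $\asymp 2^{Jd}$), pass to an $\varepsilon$-net, apply a Bernstein inequality for Markov chains to each fixed pair in the net, and take a union bound. For fixed $u,v\in S_J$, set $f=\sum_\lambda u_\lambda\Psi_\lambda$, $g=\sum_\lambda v_\lambda\Psi_\lambda$, so that
\[ u^T(\hat{\textbf{R}}_J-\textbf{R}_J)v = \frac{1}{n}\sum_{i=0}^{n-1}\bigl[f(X_i)g(X_{i+1})-\mathbb{E} f(X_0)g(X_1)\bigr]. \]
The concentration tool from Jiang et al.\ \cite{JiangSunFan18}, which requires only the spectral gap assumption \textbf{A6}, controls such a sum by $C(\sigma\sqrt{t/n}+M t/n)$ on an event of probability $\geq 1-2e^{-t}$, where $M$ is an $L^\infty$-envelope and $\sigma^2$ an $L^2$-variance of the summand (possibly applied to the bivariate chain $(X_i,X_{i+1})$ in the $\hat{\textbf{R}}_J$ case).

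The first bound (\ref{Bound R_J spectral norm}) then follows from the Bernstein-type inequality $\|h\|_{L^\infty}\leq C\sqrt{\dim V_J}\,\|h\|_{L^2}$ for $h\in V_J$: it yields $\|fg\|_{L^\infty}\leq C\cdot 2^{Jd}$ and $\|fg\|_{L^2}\leq C$. Taking $t\asymp 2^{Jd}$ in the deviation bound, the Gaussian term gives $\sqrt{2^{Jd}/n}$ and the Bernstein term gives $2^{2Jd}/n$, which is absorbed into the Gaussian term precisely under the hypothesis $2^{3Jd}\leq cn$. An $\varepsilon$-net of $S_J$ with fixed $\varepsilon<1$ has cardinality $\exp(C\cdot 2^{Jd})$, so a union bound plus the standard net$\to$sphere factor of $2$ produces the stated $4\exp(-2^{Jd})$ failure probability.

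The second bound (\ref{Bound G_J spectral norm}) is identical in structure but with $\varphi(x)=f(x)g(x)$. Now $\|\varphi\|_{L^\infty}\leq C\cdot 2^{Jd}$ still holds, but $\|\varphi\|_{L^2}\leq \|f\|_{L^\infty}\|g\|_{L^2}\leq C\sqrt{2^{Jd}}$, giving the larger variance term $\sqrt{2^{2Jd}/n}$ in the final rate; the condition $2^{3Jd}\leq cn$ again ensures the Bernstein remainder is dominated. For (\ref{Bound G_JPspectral norm}), the point is that $w=\textbf{P}_J v$ can be represented as the $V_J$-projection of $P\bar g$ where $\bar g=\sum v_\lambda\Psi_\lambda$; assumption \textbf{A4} together with Cauchy--Schwarz forces $\|P\bar g\|_{L^\infty}\leq \|p\|_{L^\infty}\|\bar g\|_{L^1}\leq C_2$, and the $L^\infty$-stability of the wavelet projection $\pi_J$ then gives $\|w\|_{L^\infty}\leq C$ (a constant, not growing with $J$). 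Repeating the previous argument with this improved envelope, $\|fw\|_{L^\infty}\leq C\sqrt{2^{Jd}}$ and $\|fw\|_{L^2}\leq C$, yielding the sharper bound $\sqrt{2^{Jd}/n}$.

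The main technical obstacle I anticipate is making the $L^\infty$-stability of $\pi_J$ in step (3) rigorous for the specific basis used, and handling the pair-Markov-chain concentration cleanly in (\ref{Bound R_J spectral norm}); one clean way for the latter is to decompose $f(X_i)g(X_{i+1}) = (f(X_i)-\mathbb{E}[f(X_i)g(X_{i+1})\mid X_i])\cdot(\text{martingale part})+ Pg(X_i)f(X_i)$, apply Markov-chain Bernstein to the second piece, and apply a martingale Freedman/Bernstein inequality to the first. The remaining bookkeeping—verifying that the $\varepsilon$-net constants, the spectral gap dependence, and the net$\to$sphere conversion all fit inside the stated constants $C,C',C''$—is routine.
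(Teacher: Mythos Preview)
Your proposal is correct and follows essentially the same route as the paper: an $\varepsilon$-net on the unit sphere of $\textbf{V}_J$, the Bernstein inequality for Markov chains from \cite{JiangSunFan18} applied pointwise with exactly the variance and $L^\infty$ envelopes you identify, and a union bound over the net. The only cosmetic differences are that the paper handles the pair structure in \eqref{Bound R_J spectral norm} by an even/odd index split---reducing to the bivariate stationary chain $(X_{2i},X_{2i+1})$, whose absolute spectral gap is inherited from the original chain---rather than your suggested martingale decomposition, and in \eqref{Bound G_JPspectral norm} it bounds $\|P_J u\|_{L^\infty}$ directly via $\|p_J\|_{L^2}$ instead of invoking $L^\infty$-stability of $\pi_J^\lambda$.
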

	\noindent \begin{proof} 
		We only prove \eqref{Bound R_J spectral norm} as the two other bounds follow from the same argument. \\
		We use an $\epsilon$-net argument, arguing exactly as in the proof of Lemma 1.1 in Cand\`es and Plan \cite{CandesPlan11}. Indeed, arguing as in \cite{CandesPlan11} we have, since  $\textbf{V}_J$ has dimension $C2^{Jd}$, that there exists a $\frac{1}{4}$-net $D_\frac{1}{4}$ of the unit sphere in $\textbf{V}_J$ for Euclidean distance of cardinality less than $9^{C 2^{Jd}}$. \\
		Now let $\textbf{v}$ and $\textbf{u}$ with $\|\textbf{u}\|_2=\|\textbf{v}\|_2=1$ such that $\norm{\hat{ \textbf{R}}_J-\textbf{R}_J}_\infty=\textbf{v}^T (\hat{ \textbf{R}}_J-\textbf{R}_J ) \textbf{u}$ and $\textbf{u}_0$ and $\textbf{v}_0$ contained in $D_{\frac{1}{4}}$ such that $\| \textbf{u}-\textbf{u}_0\|_2 \leq 1/4, ~\| \textbf{v}-\textbf{v}_0\|_2 \leq 1/4$. We obtain that
		\begin{align*}
		& \norm{\hat{ \textbf{R}}_J-\textbf{R}_J}_\infty \\  =&  \langle \textbf{v}_0, (\hat{ \textbf{R}}_J-\textbf{R}_J) \textbf{u}_0 \rangle + \langle \textbf{v}-\textbf{v}_0, (\hat{ \textbf{R}}_J-\textbf{R}_J) \textbf{u} \rangle + \langle \textbf{v}_0, (\hat{ \textbf{R}}_J-\textbf{R}_J)( \textbf{u}-\textbf{u}_0 )\rangle  \\
		\leq  & \frac{1}{2} \norm{\hat{ \textbf{R}}_J-\textbf{R}_J}_\infty + \langle \textbf{v}_0, (\hat{ \textbf{R}}_J-\textbf{R}_J) \textbf{u}_0 \rangle
		\end{align*}
		and hence it suffices to bound $ \textbf{v}^T  \left( \hat {\textbf{R}}_J-\textbf{R}_J\right)\textbf{u}$ on $D_{\frac{1}{4}}$. Generalizing formula (24) in Lemma 19 in Nickl and S\"ohl \cite{NicklSohl} by using a Bernstein inequality for non reversible Markov chains by Jiang et al. \cite{JiangSunFan18} we obtain Lemma \ref{Pointwise bound} which can be found in the appendix. Applying Lemma \ref{Pointwise bound} and  using a union bound we obtain,
		\begin{align*}
		& \mathbb{P}\left( \norm{\left(\hat{ \textbf{R}}_J-\textbf{R}_J\right)}_{\infty}  > 2 C\sqrt{\frac{2^{Jd}}{n}} x\right) \\
		\leq & \mathbb{P}\left(\max_{\textbf{u}_0, \textbf{v}_0\in D_\frac{1}{4}}  \langle \textbf{v}_0, (\hat{ \textbf{R}}_J-\textbf{R}_J) \textbf{u}_0 \rangle \ > C\sqrt{\frac{2^{Jd}}{n}} x\right) 
		\leq 
		2 \cdot 9^{2C2^{Jd}}\left( e^{-2^{Jd}x}\right).
		\end{align*}
		Applying this with $x=1+2C\ln\left(9\right)$ finishes the proof of \eqref{Bound R_J spectral norm}. For the other two bounds, we use the same argument as above,	appealing to  the bounds \eqref{Lemma Bernstein G} and \eqref{Lemma Bernstein G sharp}, respectively, instead.

	\end{proof} 
	We continue with bounding $\|\tilde{\textbf{R}}_J-\textbf{R}_J\|_F$. 
	Throughout the rest of the proof we work on the event \begin{align}
	\Omega:=\bigg\{ 	\notag & \norm{\hat{ \textbf{R}}_J-\textbf{R}_J}_\infty \leq C\sqrt{\frac{2^{Jd}}{n}}, ~\norm{\hat{ \textbf{G}}_J-\textbf{G}_J}_\infty \leq C'\sqrt{\frac{2^{2Jd}}{n}},\\ & \norm{(\hat{\textbf{G}}_J-\textbf{G}_J)\textbf{P}_J}_\infty \leq C''\sqrt{\frac{2^{Jd}}{n}} \bigg \}
	\label{Omega}
	\end{align}
	which happens by Lemma \ref{Lemma Spectral norm bound} with probability at least $1-6e^{-2^{Jd}}$. \\
	We now prove Frobenius norm bounds by applying Theorem 2 (iii) by Klopp \cite{Klopp11}. For completeness, we briefly present her proof below. \\
	As noted by Bunea et. al. \cite{BuneaSheWegkamp11} the hard threshold estimator \eqref{Estimator Hard thresh} is the solution of the rank penalized problem
	\begin{equation} \label{Rank penalized estimator}
	\tilde{\textbf{R}}_J = \argmin_{\textbf{S}} \| \hat{\textbf{R}}_J-\textbf{S} \|_F^2 + \alpha^2 \rank(\textbf{S}). 
	\end{equation}
	We suppose that the constant in the definition of $\alpha$ is large enough such that $\alpha\geq 2 C\sqrt{2^{Jd}/n}$. Since $\tilde{\textbf{R}}_J$ is the minimizer of \eqref{Rank penalized estimator} the first inequality holds for any $\textbf{S}$ and afterwards we use that \\ $\langle A, B \rangle \leq \sqrt{\rank(A)} \|A\|_F \|B\|_\infty$ and that $2ab \leq a^2+b^2$ to obtain that 
	\begin{align*}
	& \| \tilde{\textbf{R}}_J-\textbf{R}_J\|_F^2  \leq \|\textbf{S}-\textbf{R}_J\|_F^2 +2 \langle \hat{\textbf{R}}_J-\textbf{R}_J,\tilde{\textbf{R}}_J-\textbf{S} \rangle +\alpha^2 (\rank(\textbf{S})-\rank(\tilde{\textbf{R}}_J))  \\
	\leq & \|\textbf{S}-\textbf{R}_J\|_F^2+\alpha\sqrt{\rank(\tilde{\textbf{R}}_J)+\rank(\textbf{S})} \|\tilde{\textbf{R}}_J-\textbf{S}\|_F +\alpha^2 (\rank(\textbf{S})-\rank(\tilde{\textbf{R}}_J)) \\
	\leq & \|\textbf{S}-\textbf{R}_J\|_F^2+{\alpha}\sqrt{\rank(\tilde{\textbf{R}}_J)+\rank(\textbf{S})} \|\tilde{\textbf{R}}_J-\textbf{R}_J\|_F \\
	&+ {\alpha}\sqrt{\rank(\tilde{\textbf{R}}_J)+\rank(\textbf{S})} \|{\textbf{R}}_J-\textbf{S}\|_F+\alpha^2 (\rank(\textbf{S})-\rank(\tilde{\textbf{R}}_J)) \\
	\leq & \frac{3}{2}\|\textbf{S}-\textbf{R}_J\|_F^2+\frac{1}{2}\| \tilde{\textbf{R}}_J-\textbf{R}_J\|_F^2 + 2 \alpha^2\rank(\textbf{S}).
	\end{align*}
	Summarizing, rearranging terms, we have that on $\Omega$
	\begin{align}
	\label{Lounici}
	I& =\|\hat{\textbf{G}}_J^{-1}\|_\infty \norm{\tilde{\textbf{R}}_J-\textbf{R}_J}_F\lesssim \| \hat{\textbf{G}}_J^{-1} \|_\infty \inf_{\textbf{S}\in\textbf{V}_J \times \textbf{V}_J}\left(\norm{\textbf{S}-\textbf{R}_J}_F^2+ \alpha^2  \rank\left(\textbf{S}\right)\right)^{1/2} \notag \\
	& \leq  \| \hat{\textbf{G}}_J^{-1} \|_\infty \left ( \norm{\textbf{R}_{r,J}-\textbf{R}_J}_F^2+r \alpha^2 \right )^{1/2}
	\end{align}
	We now find the adequate $\textbf{R}_{r,J}$ in \eqref{Lounici} . 
	\subsubsection{Bounding II - low rank approximation error}
	By construction of the extension of operators on $V_J$ as operators in the sequence space we have that, $\textbf{R}_J= {\boldsymbol{\pi}}_J^\lambda \textbf{G} \textbf{P}{\boldsymbol{\pi}}_J^\lambda$, where ${\boldsymbol \pi}_J^{\lambda}$ is the orthogonal projection on $\textbf{V}_J$ 
	with respect to the Euclidean scalar product. 
	\\For any rank $r$ approximation $P_r$ of $P$, $\textbf{R}_{r, J}:= {\boldsymbol{\pi}}_J^\lambda \textbf{G} \textbf{P}_r{\boldsymbol{\pi}}_J^\lambda$ is a rank $r$  approximation of $\textbf{R}_J$ and fulfills $\norm{\textbf{R}_{r, J}-\textbf{R}_J}_F\lesssim  \norm{P_{r}-P}_F \lesssim \|P_r-P\|_{F,\mu}$ as $\|G\|_\infty \leq \|\mu\|_{L^\infty} \lesssim 1$ by \textbf{A2} and by Lemma \ref{Lemma HS}. We define a rank $r$ approximation of $P$ as follows:
	\begin{equation} \label{Low rank approx}
	P_{r}f:=\sum_{k=0}^{r-1} \lambda_k \av{u_k, f}_\mu v_k ~~\text{for}~ f \in L^2(\mu). 
	\end{equation}
	This provides a sequence of approximations $\textbf{R}_{r, J}$ of $\textbf{R}_J$ satisfying \begin{equation} \label{Low rank approx R} \norm{\textbf{R}_{r,J}-\textbf{R}_J}_F^2\lesssim \norm{P_r-P}_{F, \mu}^2 = \sum_{k\geq r} \lambda_k^2. \end{equation} We recall that by assumption \textbf{A5} $\lambda_k\leq C_3 \exp\left(-C_4 k^{2/d}\right)$. Denote by $\lceil \cdot \rceil$ the ceiling function and set \begin{equation} \label{Low rank approx r} r:= \left \lceil C\log \left ({\frac{1}{\alpha}} \right )^{\frac{d}{2}} \right \rceil +2 \end{equation} for $C >0$ large enough. With this choice we obtain that
	\begin{align} \label{Bound low rank approx}
	\norm{\textbf{R}_{r, J}-\textbf{R}_J}_F^2 &\lesssim \sum_{k\geq r} \lambda_k^2
	\lesssim \int_{2C_4r^{\frac{1}{d}}}^\infty x^{d-1}\exp\left(- \frac{x^2}{2} \right) \text{d}x.
	\end{align}
	If $d\geq 3$, we use integration by parts
	\begin{align*}
	F_d\left(y\right):=\int_{y}^\infty x^{d-1}\exp\left(- \frac{x^2}{2} \right) \text{d}x= y^{d-2} \exp\left(- \frac{y^2}{2}\right)+&\left(d-2\right) \int_{y}^\infty x^{d-3}\exp\left(- \frac{x^2}{2} \right) \text{d}x\\
	=y^{d-2} \exp\left(- \frac{y^2}{2}\right)+&\left(d-2\right) F_{d-2}\left(y\right),
	\end{align*}
	and it remains to bound $F_d$ for $d=2$ and $d=1$.  For $y \geq 1$ we have that $F_{1}(y) \leq F_2(y)=\exp(-y^2/2)$  and therefore, by choice of $r$, we obtain overall that

	\begin{equation} \label{bound low rank approx}
	\norm{\textbf{R}_{r,J}-\textbf{R}_J}_F^2 \lesssim\left(\log\frac{1}{\alpha}\right)^{\frac{d}{2}} \alpha^2.
	\end{equation}
	Since  $\rank(\textbf{R}_{r, J})=r$, \eqref{Lounici} implies that on $\Omega$ 
	\begin{align} \label{FrobPJ}
	\norm{\tilde{\textbf{R}}_J- \textbf{R}_J}_F^2
	& \lesssim \alpha^2 \left(\log{\frac{1}{\alpha}}\right)^{\frac{d}{2}}
	\lesssim {\frac{2^{Jd}}{n}} \left (\log{n} \right )^\frac{d}{2}.
	\end{align}
	\subsubsection{Bounding III - correction of the scalar product}
	In this section we bound the third term, $r^{1/2}\|(\hat{ \textbf{G}}_J^{-1}-\textbf{G}_J^{-1})  \textbf{R}_{r,J}\|_\infty $, in the decomposition \eqref{Proof Main decomp}. Moreover, we prove that $\| \hat{\textbf{G}}_J^{-1}\|_\infty \lesssim 1$ on $\Omega$. \\
Since by \textbf{A2}  the invariant density is bounded away from zero, we have  that \begin{align*}
\inf_{\textbf{u} \in \textbf{V}_J} \langle \textbf{u},\textbf{G}_J \textbf{u} \rangle & =  \inf_{u \in V_J} \langle u, G_J u \rangle = \inf_{u \in V_J} \langle u, G u \rangle  =\inf_{u \in V_J} \int_{\mathbb{T}^d} u^2(x)\mu(x) \text{d}x \geq c_\mu \|\textbf{u}\|_2^2. \end{align*}  Hence, when viewed as a $\dim(\textbf{V}_J) \times \dim(\textbf{V}_J)$-matrix, the smallest eigenvalue of $\textbf{G}_J$ is lower bounded by $c_\mu$ and consequently we obtain that $\|\textbf{G}_J^{-1}\|_\infty \leq c_\mu^{-1}\lesssim 1$.  Moreover, on the event $$\left \{ \norm{\hat{\textbf{G}}_J-{\textbf{G}_J}}_\infty\leq c_\mu/2 \right \} \supset \Omega,$$ we have by the triangle inequality $	\forall \textbf{u} \in \textbf{V}_J$ that 
	\begin{align*}
\norm{\hat{\textbf{G}}_J \textbf{u}}_{2}\geq \norm{\textbf{G}_J\textbf{u}}_{2}-\norm{\textbf{G}_J-\hat{\textbf{G}}_J}_\infty\norm{\textbf{u}}_{2} \geq \frac{c_\mu}{2} \|\textbf{u}\|_2
	\end{align*}
and hence on $\Omega$ \begin{equation} \label{Bound hat G_J-1}\| \hat{\textbf{G}}_J^{-1}\|_\infty \lesssim 1 ~~~\text{and}~~~\|\textbf{G}_J^{-1}\|_\infty\lesssim 1.  \end{equation} 
	%
	Moreover, due to the bounds \eqref{Bound G_J spectral norm}, \eqref{Bound G_JPspectral norm}, \eqref{bound low rank approx},  \eqref{FrobPJ} and \eqref{Bound hat G_J-1} and using the identity $\textbf{G}_J^{-1} -\hat {\textbf{G}}_J^{-1}=\textbf{G}_J^{-1} (\textbf{G}_J-\hat{\textbf{G}}_J)\hat{\textbf{G}}_J^{-1}$ we obtain that 
	\begin{align*}
	& \|(\hat{ \textbf{G}}_J^{-1}-\textbf{G}_J^{-1})  \textbf{R}_{r,J}\|_\infty   \lesssim   \|\hat{ \textbf{G}}_J^{-1}-\textbf{G}_J^{-1}\|_\infty \| \textbf{R}_{r,J}-\textbf{R}_J\|_F+\|(\hat{ \textbf{G}}_J^{-1}-\textbf{G}_J^{-1})  \textbf{R}_{J}\|_\infty \\
	\lesssim & \|\textbf{G}_J^{-1}\|_\infty  \|\hat{\textbf{G}}_J^{-1}\|_\infty  \| \hat{ \textbf{G}}_J-\textbf{G}_J\|_\infty \| \textbf{R}_{r,J}-\textbf{R}_J\|_F+\|(\hat{ \textbf{G}}_J^{-1}-\textbf{G}_J^{-1})  \textbf{R}_{J}\|_\infty \\
	\lesssim &
	\| \hat{ \textbf{G}}_J-\textbf{G}_J\|_\infty \left ( \| \textbf{R}_{r,J}-\textbf{R}_J\|_F+
	\| \textbf{G}_J^{-1}\textbf{R}_J-\textbf{P}_J\|_F \right ) +\|(\hat{ \textbf{G}}_J-\textbf{G}_J )\textbf{P}_J\|_\infty \\
	\lesssim & \sqrt{\frac{2^{2Jd}}{n}} \cdot \left ( \sqrt{\frac{2^{Jd}}{n}} \log(n)^{\frac{d}{4}}+IV\right )+\sqrt{\frac{2^{Jd}}{n}} .
	\end{align*}
	\subsubsection{Bounding IV - bias bounds}
	It is left to bound the term IV in \eqref{Proof Main decomp}. \\
We denote by $\pi_J^\lambda$ and $\pi_J^\mu$ the orthogonal projectors on ${V}_J$ for the $\lambda$ and $\mu$ scalar products respectively. \cite{GobetHoffmannReiss} remarks that the non-zero eigenpairs of $\pi_J^\mu P\pi_J^\lambda$ and $G_J^{-1} R_J$ are identical, where  $G_J^{-1}$ denotes the pseudo-inverse of $G_J$.  We quickly prove this here for completeness. 
	\begin{lemma}
		We have the equality
		\begin{align*}
		\pi_J^\mu=(\pi_J^\lambda G \pi_J^\lambda)^{-1} \pi_J^\lambda {G}
		\end{align*}
		which implies that 
		\begin{equation} {G}_J^{-1}{R}_J= \left(\pi_J^\lambda {G} \pi_J^\lambda\right)^{-1} \pi_J^\lambda{R} \pi_J^\lambda=\pi^\mu_J {P}\pi^\lambda_J.
		\label{Bias equality}
		\end{equation}
	\end{lemma}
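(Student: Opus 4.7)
The plan is to establish the first identity by a direct variational characterization of the $\mu$-orthogonal projection onto $V_J$, and then obtain the second identity as an immediate consequence by right-multiplying by $P\pi_J^\lambda$.

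For the first identity, I would fix $f\in L^2$ and characterize $g=\pi_J^\mu f$ as the unique element of $V_J$ such that $\langle h, f-g\rangle_\mu = 0$ for every $h\in V_J$. Using the definition of $G$, this is equivalent to $\langle h, Gf\rangle = \langle h, Gg\rangle$ for all $h\in V_J$, i.e.\ $\pi_J^\lambda G f = \pi_J^\lambda G g$. Since $g\in V_J$ means $g=\pi_J^\lambda g$, the right-hand side equals $\pi_J^\lambda G \pi_J^\lambda g$. Solving for $g$ yields $g = (\pi_J^\lambda G \pi_J^\lambda)^{-1}\pi_J^\lambda G f$, which is exactly the claimed formula.

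Before solving, however, one must justify that $\pi_J^\lambda G \pi_J^\lambda$ is invertible on $V_J$ — this is the one place where something substantive is needed. This follows from assumption \textbf{A2}: for every $u\in V_J$ we have $\langle u, \pi_J^\lambda G \pi_J^\lambda u\rangle = \langle u, Gu\rangle = \|u\|_{L^2(\mu)}^2 \geq c_\mu \|u\|_{L^2}^2$, so the operator is uniformly positive definite on $V_J$ and hence boundedly invertible.

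Having established $\pi_J^\mu = (\pi_J^\lambda G \pi_J^\lambda)^{-1}\pi_J^\lambda G$, the second identity is purely algebraic. Since $R = GP$ and the restrictions satisfy $G_J = \pi_J^\lambda G \pi_J^\lambda$ and $R_J = \pi_J^\lambda G P \pi_J^\lambda$, we can factor
\begin{equation*}
G_J^{-1} R_J \;=\; (\pi_J^\lambda G \pi_J^\lambda)^{-1}\pi_J^\lambda G \,(P\pi_J^\lambda) \;=\; \pi_J^\mu P \pi_J^\lambda,
\end{equation*}
by applying the first identity with the argument $P\pi_J^\lambda$. No further ingredients are required; the only conceptual point is the translation from the $\mu$-inner product to the Lebesgue inner product via $G$, and the only technical point is the invertibility of $G_J$ on $V_J$, both of which are immediate from the assumptions.
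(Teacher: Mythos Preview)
Your proof is correct and follows essentially the same approach as the paper: both derive the normal equation $\pi_J^\lambda G \pi_J^\lambda \pi_J^\mu = \pi_J^\lambda G$ from the variational/orthogonality characterization of $\pi_J^\mu$ and then invert. The only cosmetic difference is that the paper phrases the first step as an operator-level minimization of $\|G^{1/2}(I-\pi_J^\mu\pi_J^\lambda)\|_F^2$ while you work pointwise with the orthogonality condition $\langle h, f-g\rangle_\mu=0$; your explicit invocation of \textbf{A2} to justify invertibility of $G_J$ on $V_J$ is a welcome addition that the paper leaves implicit by appealing to the pseudo-inverse.
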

	\noindent Indeed, $\pi_J^\mu$ minimizes
	\begin{equation*}
	\|{G}^{1/2} (I-\pi_J^\mu \pi_J^\lambda) \|_F^2 ,
	\end{equation*}
	leading to the normal equation
	\begin{align*}
	\pi_J^\lambda {G} (I- \pi_J^\lambda \pi_J^\mu )=0~~& \implies \pi_J^\lambda {G}  \pi_J^\lambda \pi_J^\mu = \pi_J^\lambda {G} \\
	& \implies \pi_J^\mu = (  \pi_J^\lambda {G}  \pi_J^\lambda)^{-1} \pi_J^\lambda {G},
	\end{align*}
	where $(  \pi_J^\lambda {G}  \pi_J^\lambda)^{-1}$ denotes the pseudo-inverse of  $\pi_J^\lambda {G}  \pi_J^\lambda$. 
	\qed \\
	Using this identity, we establish the bias bounds.
	\begin{lemma}
		\label{Bias bound}
		The bias satisfies~:
		\begin{equation}
		\norm{ \textbf{G}_J^{-1}\textbf{R}_J - \textbf{P}}_F
		\lesssim~2^{-Js}.
		\end{equation}
	\end{lemma}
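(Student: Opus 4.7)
The plan is to invoke the operator identity \eqref{Bias equality}, which identifies $\textbf{G}_J^{-1}\textbf{R}_J$ (extended by zero outside $\textbf{V}_J$) with $\pi_J^\mu P\pi_J^\lambda$, and then the Frobenius--kernel isometry of Lemma~\ref{IsoPp} to recast the object of interest as $\|\pi_J^\mu P\pi_J^\lambda - P\|_F$. Since $\mu$ is bounded above and below by \textbf{A2}, the Frobenius (HS-on-$L^2$) norm is equivalent up to universal constants to the Hilbert--Schmidt norm on $L^2(\mu)$, which is much more convenient because the SVD of $P$ is orthonormal there and \textbf{A7} is phrased accordingly; I therefore work with the $L^2(\mu)$ HS norm, denoted $\|\cdot\|_{F,\mu}$ in what follows.

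The natural split is
\[
\pi_J^\mu P\pi_J^\lambda - P = (\pi_J^\mu - I)P + \pi_J^\mu P(\pi_J^\lambda - I),
\]
and I would bound the two terms separately using the SVD $Pf=\sum_k\lambda_k\langle u_k,f\rangle_\mu v_k$. The first piece satisfies $\|(I-\pi_J^\mu)P\|_{F,\mu}^2 = \sum_k \lambda_k^2\|(I-\pi_J^\mu)v_k\|_{L^2(\mu)}^2$. Since $\pi_J^\mu v_k$ is the $L^2(\mu)$-best approximation of $v_k$ in $V_J$ and $\pi_J^\lambda v_k\in V_J$, this is upper bounded by $\sum_k \lambda_k^2\|v_k-\pi_J^\lambda v_k\|_{L^2(\mu)}^2\lesssim \sum_k \lambda_k^2\|(I-\pi_J^\lambda)v_k\|_{L^2}^2$; Jackson's inequality for the wavelet basis then yields $\|(I-\pi_J^\lambda)v_k\|_{L^2}\lesssim 2^{-Js}\|v_k\|_{H^s}$, and summation using \textbf{A7} delivers the desired $2^{-2Js}$ bound.

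For the second piece, using that $\pi_J^\mu$ is an $L^2(\mu)$-orthogonal projection reduces matters to $\|P(I-\pi_J^\lambda)\|_{F,\mu}^2 = \sum_k \lambda_k^2\|(I-\pi_J^\lambda)^\ast u_k\|_{L^2(\mu)}^2$, where the adjoint is taken in $L^2(\mu)$. A short calculation gives $(I-\pi_J^\lambda)^\ast g = g - \mu^{-1}\pi_J^\lambda(\mu g)$, whence $\|(I-\pi_J^\lambda)^\ast u_k\|_{L^2(\mu)}^2 \lesssim \|(I-\pi_J^\lambda)(\mu u_k)\|_{L^2}^2\lesssim 2^{-2Js}\|\mu u_k\|_{H^s}^2$ by Jackson. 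The main obstacle surfaces precisely here: the factor $\mu u_k$ (rather than $u_k$) reflects the mismatch between the $\lambda$- and $\mu$-inner products underlying the two projections, and smoothness of $\mu$ must be absorbed into the bound. Since $s\ge d$ the Sobolev space $H^s$ is a Banach algebra, so $\|\mu u_k\|_{H^s}\lesssim \|\mu\|_{H^s}\|u_k\|_{H^s}$, and \textbf{A3} together with \textbf{A7} close the estimate.
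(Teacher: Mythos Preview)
Your argument is correct and follows essentially the same route as the paper's: invoke the identity $\textbf{G}_J^{-1}\textbf{R}_J=\pi_J^\mu P\pi_J^\lambda$, split into $(I-\pi_J^\mu)P$ and $P(I-\pi_J^\lambda)$ (up to a harmless $\pi_J^\mu$), reduce the first piece to Jackson on the $v_k$ and the second to Jackson on $\mu u_k$, and close with the Banach--algebra bound $\|\mu u_k\|_{H^s}\lesssim\|\mu\|_{H^s}\|u_k\|_{H^s}$ together with \textbf{A3} and \textbf{A7}. The only cosmetic differences are that the paper handles the first term via the factorisation $(I-\pi_J^\mu)=(I-\pi_J^\mu)(I-\pi_J^\lambda)$ combined with $\|I-\pi_J^\mu\|_\infty\lesssim 1$, and evaluates $\|P(I-\pi_J^\lambda)\|_F^2$ by summing over $\Psi_\lambda\notin V_J$ rather than through the $L^2(\mu)$-adjoint of $I-\pi_J^\lambda$; both devices land on the same estimates.
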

	\begin{proof}
		Note that  $(I-\pi_J^\mu)=(I-\pi_J^\mu)(I-\pi_J^\lambda)$ and that $\|I-\pi_J^\mu\|_\infty \lesssim 1$ arguing as in the proof of Lemma 4.4. in \cite{GobetHoffmannReiss}: indeed, by assumption $\textbf{A2}$ on $\mu$ and Lemma \ref{Lemma HS}   we have that 
		\begin{align*}
		\|\pi_J^\mu\|_\infty & = \sup_{g:~\|g\|_{L^2} \leq 1} \|\pi_J^\mu g\|_{L^2}  \leq \|\mu^{-1}\|_{L^\infty}^{1/2}\sup_{g:~\|g\|_{L^2(\mu)}\leq 1} \|\pi_J^\mu g\|_{L^2}\\ & \leq   \|\mu^{-1}\|_{L^{\infty}} \sup_{g:~\|g\|_{L^2(\mu)}\leq 1} \|\pi_J^\mu g\|_{L^2(\mu)} \lesssim 1.
		\end{align*}
		Therefore, we obtain that 
		\begin{align*}
		\norm{ \textbf{G}_J^{-1}\textbf{R}_J - \textbf{P}}_F  &  = \| \pi_J^\mu P \pi_J^\lambda -P\|_F \leq \|(I-\pi_J^\mu)P\|_F+\|P(I-\pi_J^\lambda)\|_F \\
		&  \lesssim\|(I-\pi_J^\lambda)P\|_F+\|P(I-\pi_J^\lambda)\|_F.
		\end{align*}
If the $\mu$-orthonormal system of right singular functions $\{u_k\}_{k \in \mathbb{N}}$ does not form a $\mu$-orthonormal basis of $L^2(\mu)$, we extend it, by Zorn's Lemma and the Gram-Schmidt process, to a $\mu$-orthonormal basis of $L^2(\mu)$ which we also denote by $\{u_k\}_{k \in \mathbb{N}}$. Applying Lemma \ref{Lemma HS} again and using that, after possible extension,  $\{u_k\}_{k \in \mathbb{N}}$ is a basis of $L^2(\mu)$, we obtain that
		\begin{align*}
		\| (I-\pi_J^\lambda)P\|_F^2 & \lesssim \| (I-\pi_J^\lambda)P\|_{F, \mu}^2= \sum_k \| (I-\pi_J^\lambda)Pu_k \|_{L^2(\mu)}^2 \\ &  = \sum_k \lambda_k^2 \| (I-\pi_J^\lambda)v_k \|_{L^2(\mu)}^2	 \lesssim \sum_k \lambda_k^2 \| (I-\pi_J^\lambda)v_k \|_{L^2}^2 \\ & \lesssim
		\sum_k \lambda_k^2 \|v_k\|_{H^s}^2 2^{-2Js}  \lesssim  2^{-2Js},
		\end{align*}
		where we used Jackson's inequality (e.g. Proposition 4.3.24 in \cite{GineNickl16}) and assumption \textbf{A7}.
		Finally, we bound
		\begin{align*}
		\|P(I-\pi_J^\lambda )\|_F^2 & = \sum_{\Psi_{\lambda} \notin V_J} \|  P \Psi_{\lambda}\|_{L^2}^2 \\ & \lesssim \sum_{\Psi_{\lambda} \notin V_J} \|  \sum_k \lambda_k v_k   \langle u_k, \Psi_{\lambda} \rangle_\mu\|_{L^2(\mu)}^2 \\
		& = \sum_k \lambda_k^2 \|(I-\pi_J^\lambda)(\mu u_k)\|_{L^2}^2 \lesssim 2^{-2Js} 
		\end{align*}
		where we used that, for $s > d/2$, $H^s$ is a Banach algebra and Jackson's inequality and \textbf{A7} again. 
	\end{proof}
	\subsubsection{Rates of convergence for $\tilde{\textbf{P}}$}
	Taking all the above bounds together we obtain on the event $\Omega$ which happens with probability at least $1-6e^{-2^{Jd}}$ that
	\begin{align}
	\| \tilde{\textbf{P}}-\textbf{P} \|_F \lesssim \left ( \log(n)^{\frac{d}{4}} \sqrt{\frac{2^{Jd}}{n}} + 2^{-Js} \right ) \left (1+\sqrt{\frac{2^{2Jd}}{n}} 
	\right )
	\end{align}
	and hence, choosing the optimal resolution and threshold levels from \eqref{Resolution} and since $s>d/2$, we obtain that with probability at least $1-6\exp\left(-n^{\frac{d}{2s+d}} \log(n)^{-\frac{d^2}{4s+2d}}\right)$
	\begin{equation}
	\norm{\textbf{P}-\tilde{\textbf{P}}}_F\lesssim  \log\left(n\right)^{\frac{d}{2}\frac{s}{2s+d}}n^{-\frac{s}{2s+d}}.
	\label{P rates}
	\end{equation}
	\noindent
	The identification between $P$ and $\textbf{P}$ is isometric, and therefore, this proves the rates for estimation of $P$ 
	Moreover, the correspondence between $P$ and $p$ is also isometric, and thus the estimator $\tilde{p}$ achieves the same $L^2$-rates as in \eqref{P rates} on the same high probability event. This ends the proof of \eqref{Result1 p cvgc} in Theorem \ref{Result1 P}. 
	\hfill $\square$
	\subsection{Proof of Lemma \ref{Corollary low rank}} We work throughout
	on the event $\Omega$ defined in \eqref{Omega} where the results of Theorem \ref{Result1 P} hold. Moreover, we have that $\rank(\tilde P)=\rank(\tilde{\textbf{P}})\leq \rank(\tilde{\textbf{R}}_J)=:\tilde r$. Since $\tilde{\textbf{R}}_J$ is a hard thresholding estimator, we have, by Lidski's inequality and denoting by $\lambda_k(\textbf{R}_J)$ the $k$-th singular value of $\textbf{R}_J$ that 
	\begin{align*}
	\tilde r \geq k \implies \lambda_k(\textbf{R}_J)+\|\textbf{R}_J-\hat{ \textbf{R}}_J \|_\infty > 2C\sqrt{\frac{2^{Jd}}{n}}.
	\end{align*}
	On the other hand, on $\Omega$ we have that $\|\textbf{R}_J-\hat{ \textbf{R}}_J \|_\infty \leq C\sqrt{2^{Jd}/n}$. Finally, note that as in \eqref{Bound low rank approx} we have that for some small enough $c >0$
	$$ \lambda_k(\textbf{R}_J)^2 \lesssim \sum_{l \geq k} \lambda_l^2 \lesssim \exp(-ck^{\frac{2}{d}}).$$
	Hence, for $k = C' \log(n)^{\frac{d}{2}}$ for some $C'>0$ large enough, we have that $$\lambda_k(\textbf{R}_J) \leq C\sqrt{\frac{2^{Jd}}{n}}.$$
	Thus, $\lambda_k(\hat{\textbf{R}}_J)$ and the preceding singular values are set to zero by the hard thresholding procedure, implying that $\tilde r \lesssim \log(n)^{\frac{d}{2}}$. 
	\hfill $\square$
	\subsection{Lower bounds - proof of \eqref{Result lower bound}}
	In this section, we prove the minimax lower bounds showing that the rates attained by our estimator are optimal. \\ \\
	We first construct a sufficiently rich sub-set $M \subset \mathcal{M}(s)$ of transition densities. 
	Let $\pi_0$ be the $\lambda$-orthogonal projector onto constants. Let $\left(\Psi_\lambda\right)_{\lambda}$ be a $s$-regular orthonormal periodic wavelet family with at least one vanishing moment and compactly supported. Let $\left(N_J\right)$ be for each $J$ a maximal subset of wavelets of resolution $J$ such that two different wavelets in $N_J$ have disjoint support. We have that $\abs{N_J}\geq c 2^{Jd}$. Let $W_J= \text{span}\left({\Psi\in N_J}\right)$. \\
	Let $\mathcal{G}_{k,J}$ denote the set of all $k$-dimensional subspaces of $W_J$. For every element $S\in \mathcal{G}_{k,J}$, we denote $\pi_S$ the orthogonal projector from $L^2$ to $S$, and define $P_S=\pi_0+ \eta \varepsilon_n \pi_S$, with $$\varepsilon_n=  \left( \log n \right)^{-\frac{d}{4}\frac{d}{2s+d}} n^{-\frac{s}{2s+d}}$$
	and for $\eta > 0$ a constant. The following lemma shows that these $P_S$ are contained in $\mathcal{M}(s)$ for an appropriate choice of $k$ and $J$:
	\begin{lemma}
		Choose $k$ and $J$ such that
		\begin{align*}
		\frac{c_k}{2 }\left(-\log\varepsilon_n\right)^{\frac{d}{2}}\leq k&\leq c_k \left(-\log\varepsilon_n\right)^{\frac{d}{2}}\\
		\frac{c_J}{2}\log(n)^{-\frac{d}{2} \frac{1}{2s+d}} n^{1/(2s+d)} \leq 2^{J}&\leq c_J \log(n)^{-\frac{d}{2} \frac{1}{2s+d}} n^{1/(2s+d)}.
		\end{align*} Then for any choice of constants defining $\mathcal{M}(s)$ such that $\mathcal{M}(s) \neq \emptyset$, we can choose positive constants $c_k$ and $c_J$, such that for $n$ large enough $\forall S \in \mathcal{G}_{k, J}$ $P_S$ is contained in $\mathcal{M}(s)$. 
		\label{Sub Model}
	\end{lemma}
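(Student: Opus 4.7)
The plan is to verify assumptions \textbf{A1}--\textbf{A7} for $P_S = \pi_0 + \eta\varepsilon_n \pi_S$ one by one, exploiting the very simple spectral structure of $P_S$. Since every wavelet in $N_J$ has at least one vanishing moment, any $f \in W_J$ (hence any $f \in S$) is $L^2$-orthogonal to constants; together with $\pi_S \mathbf{1}=0$ this shows that $\pi_0$ and $\pi_S$ are orthogonal projectors onto orthogonal subspaces. Consequently $P_S$ is self-adjoint with eigenvalue $1$ on $\text{span}(\mathbf{1})$, eigenvalue $\eta\varepsilon_n$ on $S$ (multiplicity $k$), and $0$ elsewhere; in particular $P_S\mathbf{1}=\mathbf{1}$, so the uniform density $\mu\equiv 1$ is $P_S$-invariant, and the transition density equals
\begin{equation*}
p_S(x,y) \;=\; 1 + \eta\varepsilon_n K_S(x,y), \qquad K_S(x,y) := \sum_{i=1}^k \psi_i(x)\psi_i(y),
\end{equation*}
where $\{\psi_i\}$ is any $L^2$-orthonormal basis of $S$. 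With $\mu\equiv 1$ assumptions \textbf{A1}--\textbf{A3} are immediate, and the spectral gap $\gamma = 1-\eta\varepsilon_n$ in \textbf{A6} is bounded away from $0$ for $n$ large.

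For \textbf{A4} I would take $S$ to be spanned by $k$ wavelets from $N_J$ itself, which by construction have pairwise disjoint supports. At each $(x,y)$ at most one summand of $K_S$ is nonzero, so $\|K_S\|_{L^\infty} \lesssim \max_i \|\psi_i\|_{L^\infty}^2 \lesssim 2^{Jd}$. Substituting the prescribed scalings yields $\varepsilon_n 2^{Jd} \lesssim n^{-(s-d)/(2s+d)}(\log n)^{-c}$ for some $c>0$, which vanishes since $s\geq d$; hence $p_S\in[1/2,3/2]$ for $n$ large, giving \textbf{A4} (and in passing confirming that $P_S$ is a genuine Markov operator).

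The remaining work is \textbf{A5} and \textbf{A7}. For \textbf{A5}, since the only nontrivial singular values are $1$ (index $1$) and $\eta\varepsilon_n$ (indices $2,\ldots,k+1$), the only binding inequality is $\eta\varepsilon_n \leq C_3\exp\bigl(-C_4(k+1)^{2/d}\bigr)$; using $k\leq c_k(-\log\varepsilon_n)^{d/2}$ we get $(k+1)^{2/d}\lesssim c_k^{2/d}(-\log\varepsilon_n)$, so $\exp(-C_4(k+1)^{2/d})\geq \varepsilon_n^{CC_4 c_k^{2/d}}$; choosing $c_k$ small enough that $CC_4 c_k^{2/d}\leq 1$ and $\eta\leq C_3$ yields the bound, while the case $k'=1$ reduces to $1\leq C_3 e^{-C_4}$ which is inherited from $\mathcal{M}(s)\neq\emptyset$. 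For \textbf{A7}, Bernstein's inequality on $V_J$ gives $\|\psi_i\|_{H^s}\lesssim 2^{Js}$, whence
\begin{equation*}
\sum_{k'}\lambda_{k'}^2\bigl(\|u_{k'}\|_{H^s}^2+\|v_{k'}\|_{H^s}^2\bigr) \;\lesssim\; 1 + k\,\eta^2\varepsilon_n^2\,2^{2Js}.
\end{equation*}
A direct computation with the prescribed scalings reveals the crucial cancellation $\varepsilon_n^2\cdot 2^{2Js}\sim (\log n)^{-d/2}$, which exactly matches the growth $k \lesssim c_k(\log n)^{d/2}$, so the right-hand side is at most $1+C c_k \eta^2$; shrinking $c_k$ or $\eta$ then gives \textbf{A7}.

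The main obstacle is not conceptual but arithmetic: one must tune the free constants $c_\varepsilon, c_k, c_J, \eta$ simultaneously so that all of \textbf{A4}, \textbf{A5} and \textbf{A7} hold, and in particular verify that the delicate cancellation of the $\log n$ powers in \textbf{A7} (between the negative exponents hidden inside $\varepsilon_n$, $2^J$ and the positive exponent inside $k$) is exact for the prescribed exponents in the definition of $\varepsilon_n$.
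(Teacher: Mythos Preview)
Your approach is essentially the paper's: identify the spectrum of $P_S$, read off \textbf{A1}--\textbf{A3} and \textbf{A6} from $\mu\equiv 1$, bound $\|p_S-1\|_{L^\infty}$ by $C\eta\,2^{Jd}\varepsilon_n\to 0$ for \textbf{A4}, and use Bernstein's inequality plus the log-cancellation $k\,\varepsilon_n^2 2^{2Js}\lesssim 1$ for \textbf{A7}. Your explicit verification of \textbf{A5} and of the $\log n$ balance in \textbf{A7} is actually more detailed than what the paper writes.

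There is, however, one genuine gap in your handling of \textbf{A4}. You write ``I would take $S$ to be spanned by $k$ wavelets from $N_J$ itself'', but the lemma asserts $P_S\in\mathcal{M}(s)$ for \emph{every} $S\in\mathcal{G}_{k,J}$, i.e.\ for every $k$-dimensional subspace of $W_J$. This matters: the subsequent lower-bound argument uses Pajor's packing estimate $|M|\geq (c/c_0)^{k(|N_J|-k)}$ over the full Grassmannian, and the $\binom{|N_J|}{k}$ coordinate subspaces you allow would not supply enough hypotheses. The fix is exactly what the paper does: for an arbitrary $S\subset W_J$ with orthonormal basis $(\psi_i)$, expand $\psi_i=\sum_{\lambda\in N_J}\mathbf f_{i,\lambda}\Psi_\lambda$ and write
\[
K_S(x,y)=\sum_{\lambda,\lambda'\in N_J}\Bigl(\sum_{i=1}^k \mathbf f_{i,\lambda}\mathbf f_{i,\lambda'}\Bigr)\Psi_\lambda(x)\Psi_{\lambda'}(y).
\]
Because the $\Psi_\lambda$ in $N_J$ have pairwise disjoint supports, at each $(x,y)$ only one pair $(\lambda,\lambda')$ survives; the coefficient is an entry of the orthogonal projection matrix $\mathbf f^T\mathbf f$ and hence has modulus at most $1$, while $\|\Psi_\lambda\|_{L^\infty}\|\Psi_{\lambda'}\|_{L^\infty}\lesssim 2^{Jd}$. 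This gives $\|K_S\|_{L^\infty}\lesssim 2^{Jd}$ for \emph{all} $S$, and the rest of your \textbf{A4} argument goes through unchanged.
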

	\begin{proof}
		We carefully check that \textbf{A}\textbf{1}-\textbf{A}\textbf{7} are fulfilled. \\
		We first check $\textbf{A}\textbf{1}$-\textbf{A4} together. 
		Let $b=\left(f_i \right)_{1\leq i\leq k}$ be an orthonormal basis of $S$. Complete it into $\overline{b}=\left(f_i \right)_{1\leq i\leq \abs{N_J}}$ an orthonormal basis of $W_J$ and let $\textbf{f}_{i, \lambda} = \av{f_i, \Psi_\lambda}$ be the change of coordinate matrix between  $\left(\Psi_\lambda\right)_{\lambda\in R_J}$ and $\overline{b}$. Then
		\begin{align*}
		p_S\left(x, y\right)
		=&1+ \varepsilon_n \eta \sum_{i=1}^k \sum_{\lambda\in R_J}\sum_{\lambda'\in R_J}  \textbf{f}_{i,\lambda} \Psi_\lambda\left(x\right) \textbf{f}_{i,\lambda'} \Psi_{\lambda'}\left(y\right)
		\end{align*}
		Note that this formula implies that $\lambda$ is the invariant measure and thus $\textbf{A1}-\textbf{A3}$ once we have proved that $p_S$ defines a probability density.  Since the $\Psi_\lambda$ have disjoint support,
		\begin{align*}
		1-C \eta 2^{Jd}\varepsilon_n \leq p_S\left(x,y\right)\leq 1+ C \eta 2^{Jd}\varepsilon_n.
		\end{align*}
		Since $s \geq d$, $2^{Jd}\varepsilon_n$ goes to $0$ as $n$ grows, implying that for any $c>0$, for $n$ large enough, $0 < 1-c \leq   p_S\left(x, y\right)\leq 1+c$. Moreover, $p$ integrates to $1$ and hence $p$ is indeed a probability density and \textbf{A1}-\textbf{A4} follow.
		Moreover, by definition of $P_S$ the first eigenvalue is $1$, the next $k$ eigenvalues are $\eta \varepsilon_n$ and the remaining eigenvalues are zero. With our choices of $k$ and $\varepsilon_n$ we thus obtain $\textbf{A5}$. Likewise $\textbf{A6}$ is fulfilled as the spectral gap is precisely $1-\eta \varepsilon_n$ which can be made arbitrary close to one. Finally, by the relation $\| f_i \|_{H^s} \leq C2^{Js} \|f_i\|_{L^2}$ which holds for arbitrary $f_i \in W_J$ (see Equation 4.166 and following in chapter 4.3.6 in \cite{GineNickl16}) we obtain that
		\begin{align*}
	1+	\sum_i \lambda_i^2 \norm{f_i}_{H_s}^2 \leq  1 + Ck\eta^2 \varepsilon_n^2 2^{2Js} \leq C
		\end{align*}
		for $n$ large enough and thus $\textbf{A7}$ holds.
		%
		%
	\end{proof}
	We now choose a maximal subset $M$ of $\mathcal{G}_{k,J}$ such that for any two projections in $M$, denoted by $S_1$ and $S_2$ we have that,  \begin{equation} \label{Lower bound distance} \|p_{S_1}-p_{S_2}\|_{L^2}=\norm{P_{S_1}-P_{S_2}}_F\geq c_0 \varepsilon_n \sqrt{k} \end{equation} for a constant $c_0 > 0$. By Proposition 8 in \cite{Pajor} we have for some universal constants $c, C >0$ that,
	\begin{equation} \label{Pajor}
	\left(\frac{c}{c_0}\right)^{k \left(\abs{N_J}-k\right)}\leq \abs{M}\leq \left(\frac{C}{c_0}\right)^{k \left(\abs{N_J}-k\right)}.
	\end{equation}
	We finally add the element $p_0=1$ to $M$. \\ \\

	We now apply Theorem 2.5 in \cite{Tsybakov} and check that its conditions are fulfilled for our choices of $k$ and $\varepsilon_n$. 
	For $p_S \in M$ denote by $\mathcal{P}_S^n$ the probability measure for the Markov chain $(X_0, \dots, X_n)$ with transition density $p_S$ and invariant measure $1$.
	We first show that we can control the Kullback--Leibler divergence $K(\mathcal{P}_S^n, \mathcal{P}_0^n)$ defined for two probability measures $\mathcal{P}$ and $\mathcal{Q}$ with densities $\text{d}\mathcal{P}$ and $\text{d}\mathcal{Q}$ respectively as,
	\begin{equation*}
	K(\mathcal{P},\mathcal{Q}):=\begin{cases} \int_{\mathbb{T}^d}  \log \left ( \frac{\text{d}\mathcal{P}(x)}{\text{d}\mathcal{Q}(x)}\right ) \text{d}\mathcal{P}(x)  ~~~~\mathcal{P} ~\text{is absolutely continous with respect to} ~\mathcal{Q} \\
	\infty
	~~~~~~~~~~~~~~~~~~~~~~~~~~~~~\text{else}
	\end{cases}
	\end{equation*}
	by the squared $L^2$ norm of $p_S-p_0$ \begin{equation*}
	\label{Kullback}
	K\left(\mathcal{P}_S^n, \mathcal{P}_0^n\right)\leq n\norm{p_S-p_0}_{L^2}^2.
	\end{equation*}
	Indeed,
	\begin{align*}
	K\left(\mathcal{P}_S^n, \mathcal{P}_0^n\right)=&\mathbb{E}_{\mathcal{P}_S^n} \left[\log\left(\frac{\text{d}\mathcal{P}_S^n\left(X_0, X_1,... X_n\right)}{\text{d}\mathcal{P}_0^n\left(X_0, X_1, \dots  X_n\right)}\right)\right]\\
	=& \mathbb{E}_{\mathcal{P}_S^n} \left[\log\left(\frac{p_S\left(X_0, X_1\right)\dots p_S\left(X_{n-1}, X_n\right)}{p_0\left(X_0, X_1\right) \dots  p_0\left(X_{n-1}, X_n\right)}\right)\right]\\
	=&n\mathbb{E}_{\mathcal{P}_S^1} \left[\log\left(\frac{p_S\left(X_0, X_1\right)}{p_0\left(X_0, X_1\right)}\right)\right]. 
	\end{align*}
	Further evaluating the last equation we find,
	\begin{align*}
	\mathbb{E}_{\mathcal{P}_S^1} \left[\log\left(\frac{p_S\left(X_0, X_1\right)}{p_0\left(X_0, X_1\right)}\right)\right] = \int_{x} \int_{y} \log\left(p_S\left(x, y\right)\right) p_S\left(x, y\right)\text{d}x\text{d}y.
	\end{align*}
	We can decompose $p_S=1+ \varepsilon_n H_b$. Then, since $\log\left(1+\varepsilon_n H_b\right)\leq \varepsilon_n H_b$, we have that
	\begin{align*}
	\mathbb{E}_{\mathcal{P}_S^1} \left[\log\left(\frac{p_S\left(X_0, X_1\right)}{p_0\left(X_0, X_1\right)}\right)\right] &\leq \int_{x} \int_{y} \varepsilon_n H_b\left(x,y\right) \left(1+\varepsilon_n H_b\left(x,y\right)\right)\text{d}x\text{d}y\\
	&=\int_{x} \int_{y} \varepsilon_n^2 H_b\left(x,y\right)^2\text{d}x\text{d}y\\
	&= \norm{p_0-p_S}_{L^2}^2 = \eta^2 \varepsilon_n^2 \| \pi_S\|_F^2=\eta^2 \varepsilon_n^2k
	\end{align*}
	Thus, ordering the elements $p_S \in M$ from $0$ to $|M|$ with $p_0=1$ and denoting by $\mathcal{P}_i^n$ the respective probability measure for the chain $(X_0, \dots, X_n)$, we obtain that
	\begin{equation*}
	\frac{1}{|M|}\sum_{j=1}^{\abs{M}} \text{K}\left(\mathcal{P}_j^n, \mathcal{P}_0^n\right)\leq n \eta^2\varepsilon_n^2 k.
	\end{equation*}
	The bound \eqref{Pajor} on $|M|$ and our choices of $k$ and $J$ described in Lemma \ref{Sub Model}  then imply
	\begin{equation*}
	n \eta^2 \varepsilon_n^2k \leq  k \left( |N_J|-k)\right) \log(\frac{c}{c_0}) \leq \log\abs{M},
	\end{equation*}
	by choosing $\eta$ small enough.

	Thus, using also \eqref{Lower bound distance}, all conditions of Theorem 2.5 in \cite{Tsybakov} are met and we obtain \eqref{Result lower bound}. Moreover, by isometry the same lower bound holds for $P$. \hfill $\square$ 
	\section{Appendix}
	\subsection{Proof of Lemma \ref{diffusions}}

	The condition \(\sigma^{-2} b = \nabla B\) for some $B\in C^2$ implies, by Theorem 4.2 in \cite{Kent}, that the chain $X_t$ is reversible with invariant measure satisfying $\mu \propto e^{B}$. This identity and the bounds on the $C^{s-1}$ norms of $b$ and $\sigma^{-2}$ imply $\mu \in H^s$ and that $c \leq \mu \leq  C$ for constants $c, C >0$. Moreover, irreducibility and aperiodicity follow by the upper and lower bounds on $p$ below and thus $\textbf{A1}-\textbf{A3}$ are fulfilled. 
	Assumption \textbf{A4} follows by estimates for the heat kernel, see e.g. Theorem 1.1 in \cite{Norris} and by noting that $\sum_{x'=x+\mathbb{Z}^d} Ce^{-c\| x' - y \|_2^2}$ is summable for every $x,y \in \mathbb{T}^d$. Also note that these estimates yield $p(x,y) > c >0$ uniformly for $x, y \in \mathbb{T}^d$. \\
	Assumption \textbf{A5} is implied by Weyl's law for elliptic operators with non-smooth coefficients on closed manifolds, Theorem 3.1. in \cite{Ivrii2000}. Particularly, \textbf{A5} follows by inverting formula (3.4) in \cite{Ivrii2000} applied to the operator $\tilde L=G^{-1/2}LG^{1/2}$ where $L$ is the  infinitesimal generator $L$
	$$L=\frac{\sigma^2(x)}{2} \sum_{i=1}^d \frac{\partial^2}{\partial^2 x_i}+ \sum_{i=1}^d b_i(x) \frac{\partial}{\partial x_i}$$
	(with $m=1$ there) and by noting that the $L^2{(\mu)}$-eigenvalues of $L$ equal the $L^2(\lambda)$-eigenvalues of $\tilde L$ and that the $L^2(\mu)$-eigenvalues of $P$ equal the exponentiated $L^2(\mu)$-eigenvalues of  $L$. \\
	\textbf{A6} follows from arguing as \cite{Kweku} in the proof of Theorem 6, using exercise 7 on p. 493 in \cite{Bhattacharya} instead of the cited Lemma 2.3 there and the lower bound on $p$ from above. \\
	We now show that assumption \textbf{A7} is fulfilled. 
	Adapting Lemma 11 in \cite{NicklRay} to our situation with non-constant but scalar $\sigma$ is straightforward and we obtain that there exists a  $C=C(\|\sigma^{-2}\|_{C^{s-1}},  \|b\|_{C^{s-2}})>0$ such that for all $f\in L^2$ with $\mathbb{E}\left[ f \left( X_0 \right) \right]=0$ we have for $t\leq s$ that \begin{equation*}
	\label{Sobolev Control}
	\norm{L^{-1}(f)}_{H^t}\leq C \norm{f}_{H^{t-2}}, ~~~~\norm{(L^*)^{-1}(f)}_{H^t}\leq C \norm{f}_{H^{t-2}}
	\end{equation*}
	where $L^{-1}(f)$ denotes the solution $u$ to the inhomogeneous p.d.e. $Lu=f$.

	Since $P$ and $L$ are self-adjoint the left and right singular functions coincide, are called eigenfunctions, and we denote them by $e_k$. 
	Since $\av{e_k, 1}_\mu=0$ for $k>0$ we can use this repeatedly for the eigenfunctions $e_k$  which fulfill $L e_k = \log(\lambda_k) e_k$. This  implies that \[\norm{e_k}_{H^s}\lesssim \abs{\log{\lambda_k}}^{\lceil s/2\rceil} \| e_k\|_{L^2} \lesssim \abs{\log{\lambda_k}}^{\lceil s/2\rceil} \lesssim k^{\frac{s+2}{d}},  \]
	where the last inequality follows by using Weyl's law again. Therefore we obtain that, 
	\begin{equation*}
	\sum_k \lambda_k^2 \|e_k\|_{H^s}^2 \lesssim \sum_k k^{\frac{2s+4}{d}} e^{-ck^{\frac{2}{d}}} \lesssim 1
	\end{equation*}
	and \textbf{A7} follows. \qed

	\subsection{Lemma \ref{Pointwise bound}}
	\begin{lemma}\label{Pointwise bound}
		Assume $2^{3Jd}\leq n$ and that $\kappa n 2^{-3Jd} \geq x \geq 1$ for some constant $\kappa >1 $. Then for constants $C=C(\kappa), C'=C'(\kappa)$ and $C''=C''(\kappa)$ and $\forall u,v \in V_J$ with $\|u\|_{L^2}=\|v\|_{L^2}=1$ the three following bounds hold:
		\begin{align}
		&	\mathbb{P}\left( \textbf{v}^T  \left( \hat {\textbf{R}}_J-\textbf{R}_J\right)\textbf{u} > C \sqrt{\frac{2^{Jd}}{n}} x\right) \leq 2e^{-2^{Jd}x}
		\label{weak24gen} \\
		\label{Lemma Bernstein G}
		&	\mathbb{P}\left( \textbf{v}^T  \left( \hat {\textbf{G}}_J-\textbf{G}_J\right)\textbf{u} > C' \sqrt{\frac{2^{2Jd}}{n}} x\right) \leq  2e^{-2^{Jd}x} \\
		\label{Lemma Bernstein G sharp}
		&	\mathbb{P}\left( \textbf{v}^T  \left( \hat {\textbf{G}}_J-\textbf{G}_J\right) \textbf{P}_J \textbf{u} > C'' \sqrt{\frac{2^{Jd}}{n}} x\right) \leq 2e^{-2^{Jd}x}.
		\end{align}
	\end{lemma}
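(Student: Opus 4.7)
The plan is to rewrite each bilinear form as an empirical average along the chain and invoke the non-reversible Markov Bernstein inequality of Jiang, Sun and Fan, generalising formula (24) of Nickl and S\"ohl referenced in the main text.

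First, if $\textbf{u},\textbf{v}$ are the wavelet-coefficient vectors of $u,v \in V_J$, then
\begin{equation*}
\textbf{v}^T\hat{\textbf{R}}_J\textbf{u}=\frac{1}{n}\sum_{i=0}^{n-1}v(X_i)u(X_{i+1}),\qquad \textbf{v}^T\hat{\textbf{G}}_J\textbf{u}=\frac{1}{n+1}\sum_{i=0}^{n}v(X_i)u(X_i),
\end{equation*}
and $\textbf{v}^T\hat{\textbf{G}}_J\textbf{P}_J\textbf{u}$ equals the second expression with $u$ replaced by the function $P_Ju\in V_J$. After centring, each quantity is an empirical mean of a bounded function, either of $X_i$ alone (cases \eqref{Lemma Bernstein G} and \eqref{Lemma Bernstein G sharp}) or of the paired chain $Z_i:=(X_i,X_{i+1})$ (case \eqref{weak24gen}). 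The paired chain is Markov on $\mathbb T^{2d}$ with stationary density $\mu(x)p(x,y)$ and absolute spectral gap of the same order as that of $(X_i)$, which is bounded below by A6. Applied to a bounded function $f$ of the appropriate chain, Jiang--Sun--Fan yields
\begin{equation*}
\mathbb P\!\left(\frac{1}{n}\sum_{i}\bigl(f(Z_i)-\mathbb E f(Z_0)\bigr)>t\right)\leq 2\exp\!\left(-\frac{c\, n t^2}{\sigma^2+M t}\right),
\end{equation*}
with $M=\|f-\mathbb E f(Z_0)\|_\infty$ and $\sigma^2=\mathrm{Var}(f(Z_0))$ under stationarity.

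Next I would compute $M$ and $\sigma^2$ in each case, using the wavelet Bernstein inequality $\|w\|_\infty\lesssim 2^{Jd/2}\|w\|_{L^2}$ on $V_J$ together with the uniform bounds on $\mu$ and $p$ from A2 and A4. For \eqref{weak24gen}, $M\lesssim 2^{Jd}$, but
\begin{equation*}
\sigma^2\le\int v(x)^2u(y)^2 p(x,y)\mu(x)\,dx\,dy\le C_2C_\mu\|u\|_{L^2}^2\|v\|_{L^2}^2\lesssim 1,
\end{equation*}
the crucial point being that boundedness of $p$ and $\mu$ beats the large sup-norms of $u$ and $v$. For \eqref{Lemma Bernstein G}, $M\lesssim 2^{Jd}$ and $\sigma^2\lesssim \|v\|_\infty^2\,\mathbb E[u(X_0)^2]\lesssim 2^{Jd}$. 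For \eqref{Lemma Bernstein G sharp}, $\|Pu\|_\infty\le\|p\|_\infty\|u\|_{L^1}\lesssim 1$, and since the regular wavelet projector is $L^\infty$-bounded uniformly in $J$, $\|P_Ju\|_\infty\lesssim 1$, so $M\lesssim 2^{Jd/2}$ and $\sigma^2\lesssim 1$.

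Finally, I would substitute the prescribed thresholds and use the hypotheses $2^{3Jd}\le n$ and $x\le\kappa n 2^{-3Jd}$ to control the Bernstein denominator. In cases \eqref{weak24gen} and \eqref{Lemma Bernstein G sharp} one checks $Mt\lesssim(1+x)\sigma^2$ since $\sqrt{2^{3Jd}/n}\le 1$, so the exponent becomes $\lesssim -2^{Jd}x^2/(1+x)\le -2^{Jd}x$ for $x\ge 1$. In case \eqref{Lemma Bernstein G} one splits according to whether $2^{Jd/2}x\le 2^{Jd}$ to control the sub-Gaussian and sub-exponential regimes separately, and obtains an exponent of the same form. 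Choosing the constants $C,C',C''$ large enough absorbs all remaining factors and gives the claimed $2e^{-2^{Jd}x}$ bound. The main obstacle is the first inequality, where the summand depends on two consecutive observations; this is handled by lifting to the paired chain $(X_i,X_{i+1})$ and verifying that its absolute spectral gap inherits a lower bound from A6, after which the non-trivial variance estimate based on A4 completes the argument.
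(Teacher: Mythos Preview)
Your overall strategy---rewrite as an empirical average, apply the Jiang--Sun--Fan Bernstein inequality, and control the variance and sup-norm case by case---is exactly the paper's, and your variance and $L^\infty$ computations for all three inequalities are correct (your route to $\|P_Ju\|_\infty\lesssim 1$ via $L^\infty$-boundedness of the wavelet projector is in fact a little cleaner than the paper's Young-type argument).

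There is, however, a genuine gap in your treatment of \eqref{weak24gen}. You lift to the paired chain $Z_i=(X_i,X_{i+1})$ and assert that its absolute spectral gap inherits a lower bound from \textbf{A6}. It does not: the sliding-window chain has absolute spectral gap equal to zero. Indeed, for any $h\in L^2(\mu)$ with $\langle h,1\rangle_\mu=0$, set $f(x_0,x_1)=h(x_0)$; then $P_Zf(x_0,x_1)=\mathbb E[h(X_1)\mid X_0=x_0,X_1=x_1]=h(x_1)$, and one checks $\|P_Zf\|_{L^2(\mu_2)}=\|h\|_{L^2(\mu)}=\|f\|_{L^2(\mu_2)}$, so $\|P_Z\|=1$ on the mean-zero subspace. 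Hence the Jiang--Sun--Fan inequality, which requires a positive absolute spectral gap, cannot be applied to $(Z_i)$ as stated.

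The paper avoids this by first splitting the sum into even and odd indices and working with the \emph{two-step} paired chain $(X_{2i},X_{2i+1})_{i\ge 0}$, citing Lemma~24 of Nickl--S\"ohl for the fact that this chain has invariant density $\mu(x)p(x,y)$ and absolute spectral gap at least $1-\lambda_1$. The fix to your argument is precisely this extra decomposition; once you pass to the two-step chain the rest of your proof goes through unchanged.
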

	In each case the proof is an application of the Bernstein type inequality in Theorem 1.1 by Jiang et al. \cite{JiangSunFan18}. 
	Also note that the proof is similar to the proof of Lemma 19 in Nickl and S\"ohl \cite{NicklSohl} but that they use a different concentration inequality.
	We prove \eqref{weak24gen} carefully and only sketch the proofs of the remaining two inequalities as they follow along the same line of argumentation. \\
	Without loss of generality assume that $n$ is even. We use the identity
	\begin{align*}
	&\textbf{v}^T{\left(\hat{\textbf{R}}_J-\textbf{R}_J\right)\textbf{u}}
	={\frac{1}{n}\sum_{i=0}^{n-1} \left(v\left(X_{i}\right)u\left(X_{i+1}\right)-\mathbb{E}\left[v\left(X_0\right)u\left(X_1\right)\right]\right)} \\
	= & {\frac{1}{n}\sum_{i=0}^{n/2-1} \left(v\left(X_{2i}\right)u\left(X_{2i+1}\right)-\mathbb{E}\left[v\left(X_0\right)u\left(X_1\right)\right]\right)} \\  & +{\frac{1}{n}\sum_{i=0}^{n/2-2} \left(v\left(X_{2i+1}\right)u\left(X_{2i+2}\right)-\mathbb{E}\left[v\left(X_0\right)u\left(X_1\right)\right]\right)}
	\end{align*}
	where $v(x)=\sum_\lambda \textbf{v}_\lambda\Psi_{\lambda}(x) $ and $u$ is defined likewise.
	We only treat the first term in the equation above as the second one can be bounded with the same arguments. 
	By Lemma 24 in \cite{NicklSohl} the invariant density of the chain $(X_{2i}, X_{2i+1})_{i \in \mathbb{N}_0}$ is $$\mu_2(x_1,x_2)=\mu(x)p(x,y).$$ Moreover, denoting by $P_2$ the transition operator of $(X_{2i}, X_{2i+1})_{i \in \mathbb{N}_0}$, we can bound its absolute spectral gap by the absolute spectral gap of the original chain $(X_i)_{i \in \mathbb{N}_0}$ by applying Lemma 24 in \cite{NicklSohl}, i.e. for any $f \in L^2(\mu_2)$, $\langle f, 1 \rangle_{\mu_2}=0$, we have that
	$$ \| P_2f \|_{L^2(\mu_2)} \leq \lambda_1 \|f \|_{L^2(\mu_2)}.$$ 
	We upper bound the variance \begin{align*}V_{v,u} :&=\norm{v\left(x\right)u\left(y\right)-\mathbb{E}\left[v\left(X_0\right)u\left(X_1\right)\right] }_{L^2\left(\mu_2\right)}^2 \\
	& \leq \int v(x)^2u(y)^2\mu(x)p(x,y)dxdy \leq \| \mu\|_{L^\infty} \|p\|_{L^\infty} \leq C 
	\end{align*}
	for some constant $C > 0$. Next we bound \begin{align*}\norm{v\left(x\right)u\left(y\right)-\mathbb{E}\left[v\left(X_0\right)u\left(X_1\right)\right] }_{L^\infty} & \leq 2 \norm{v\left(x\right)u\left(y\right) }_{L^\infty} \\
	& \leq C' 2^{Jd}.
	\end{align*}
	We now apply Theorem 1.1 by \cite{JiangSunFan18} (with $\epsilon=x \sqrt{\frac{2^{Jd}}{n}}$ for some constant $\sqrt{n}2^{-3Jd/2} \geq x \geq 1$, $\sigma^2\leq C$ and $c=C' 2^{Jd}$ there) to obtain overall that for some constants $\tau, \tau'>0$
	\begin{align*}
	\mathbb{P} \left ( \textbf{v}^T{\left(\hat{\textbf{R}}_J-\textbf{R}_J\right)\textbf{u}} > x \sqrt{\frac{2^{Jd}}{n}}\right ) \leq \exp \left ( \frac{-x^2 2^{Jd}}{\tau+\tau'\frac{x2^{3Jd/2}}{\sqrt{n}}}\right ).
	\end{align*}
	Using also the assumption $2^{3Jd}\leq  n$ this yields for another constant $\tau''>0$ that 
	\begin{align*}
	\mathbb{P} \left ( \textbf{v}^T{\left(\hat{\textbf{R}}_J-\textbf{R}_J\right)\textbf{u}} > x \tau''\sqrt{\frac{2^{Jd}}{n}}\right ) \leq \exp \left ( {-x 2^{Jd}}\right ) .
	\end{align*}
	For the proof of \eqref{Lemma Bernstein G} note that we have the equality
	\begin{align*}
	\textbf{v}^T  \left( \hat {\textbf{G}}_J-\textbf{G}_J\right)\textbf{u}=\frac{1}{n+1} \sum_{i=0}^n v(X_i)u(X_i)-\mathbb{E}v(X_0)u(X_0).
	\end{align*}Hence, it remains to bound the variance and obtain a pointwise bound. We have that
	\begin{align*}
	\|v(x)u(x)-\mathbb{E}v(X_0)u(X_0)\|_{L^2(\mu)}^2 \leq \int v(x)^2u(x)^2\mu(x) \text{d}x \lesssim 2^{Jd}
	\end{align*}
	and 
	\begin{align*}
	\|v(x)u(x)-\mathbb{E}v(X_0)u(X_0)\|_{L^\infty} \lesssim 2^{Jd}.
	\end{align*}
	For the proof of \eqref{Lemma Bernstein G sharp} we argue as before, this time working with the equality
	\begin{align*}
	\textbf{v}^T  \left( \hat {\textbf{G}}_J-\textbf{G}_J\right) \textbf{P}_J\textbf{u}=\frac{1}{n+1} \sum_{i=0}^n v(X_i)\tilde{u}(X_i)-\mathbb{E}v(X_0)\tilde{u}(X_0),
	\end{align*}
	where $\tilde u=P_Ju$. As above it remains to bound the variance and obtain a pointwise bound. In this case we have that 
	\begin{align*}
	\|v(x)\tilde{u} (x)-\mathbb{E}v(X_0)\tilde{u} (X_0)\|_{L^2(\mu)}^2 \leq \int v(x)^2\tilde{u}(x)^2\mu(x) \text{d}x \lesssim \|\tilde{u}(x)\|_{L^\infty}.
	\end{align*}
	Moreover, denoting by $p_J$ the $L^2$-projection of $p$ to $V_J \times V_J$, we have by Young's convolution inequality 
	\begin{align*}
	\|\tilde u\|_{L^\infty} = \| \int p_J(\cdot, y ) u(y) dy \|_{L^{\infty}} \leq \|u\|_{L^2} \|  p_J\|_{L^2}  \lesssim 1 
	\end{align*}
	Thus, we obtain that
	\begin{align*}
	\|v(X_0)\tilde{u}(X_0)-\mathbb{E}v(X_0)\tilde{u}(X_0)\|_{L^\infty} \lesssim 2^{Jd/2}. 
	\end{align*}
	\qed

\subsection{Lemma \ref{Lemma HS}}
\begin{lemma} \label{Lemma HS}
    Suppose that for positive constants $c_\mu$ and $C_\mu$ we have that $c_\mu \leq \mu \leq C_\mu$. Then, we have that the corresponding Euclidean norms and induced Hilbert-Schmidt norms are equivalent, i.e. for any $f \in L^2$ and any linear operator $T \in L^2(\lambda \times \lambda)$
    \begin{align*}
       &  C_\mu^{-1} \| f\|_{L^2(\mu)}^2 \leq \| f \|_{L^2}^2 \leq c_\mu^{-1} \|f\|_{L^2(\mu)}^2 \\
       & C_\mu^{-1} \|T\|_{F,\mu} \leq \|T\|_{F} \leq c_{\mu}^{-1} \|T\|_{F,\mu} ,
    \end{align*}
    with constants independent of $f$ and $T$. 
\end{lemma}
\begin{proof}
    For the first assertion, we have that
    \begin{align*}
        \|f\|_{L^2}^2 = \int_{\mathbb{T}^d} f^2(x) \frac{\mu(x)}{\mu(x)} \text{d}x \leq \|\mu^{-1}\|_\infty \|f\|_{L^2(\mu)}^2 \leq c_\mu^{-1} \|f\|_{L^2(\mu)}^2
    \end{align*}
    and likewise we obtain
    \begin{align*}
        \|f\|_{L^2(\mu)}^2 =\int_{\mathbb{T}^d} f^2(x) \mu(x) \text{d}x \leq \|\mu\|_\infty \|f\|_{L^2}^2 \leq C_\mu \|f\|_{L^2}^2.
    \end{align*}
    For the second assertion, denote by $G$ the Gram operator,  $\forall u,v \in L^2$, $\langle u, Gv \rangle=\langle u, v \rangle_\mu$, i.e. $G$ applies pointwise multiplication with $\mu$, and note that $\|T\|_{F, \mu}=\|G^{1/2} T G^{1/2}\|_F$ and hence
    \begin{align*}
        \|P\|_{F, \mu} &  \leq \|G^{1/2}\|_{\infty}^2\|T\|_F =\sup_{f:\|f\|_{L^2} = 1} \|G^{1/2} f \|_2^2 \|T\|_F \\ & = \sup_{f:\|f\|_{L^2} = 1}  \int_{\mathbb{T}^d} \mu(x)f(x)^2 \text{d}x \|P\|_F  \leq C_\mu \|T\|_F .
    \end{align*}
    Likewise, we obtain
    \begin{align*}
         \|T\|_{F, \mu}^2 & \geq \inf_{f:\|f\|_{L^2} =1} \|G^{1/2} f\|_{L^2}^2  \|T\|_F \\ & = \inf_{f:\|f\|_{L^2} =1}  \int_{\mathbb{T}^d} \mu(x)f(x)^2 \text{d}x  \|P\|_F \geq c_\mu \|T\|_F.
    \end{align*}
\end{proof}

	\section*{Acknowledgements}
	Both authors are grateful to R. Nickl, K. Abraham and S. Wang for helpful discussions, to Y. Sun for pointing out a mistake in a previous version of the paper and to two anonymous referees for their helpful comments, suggestions and remarks. A. Picard would  like to thank the Statslab and ERC grant UQMSI/647812 for supporting him during the undertaking of this work while visiting R. Nickl’s research group from February to June 2018. 
\bibliography{thesisBibliography}

\begin{thebibliography}{10}

\bibitem{Kweku}
K.~Abraham.
\newblock {Nonparametric Bayesian posterior contraction rates for scalar
  diffusions with high-frequency data}.
\newblock {\em Bernoulli}, 25(4A):2696--2728, 2019.

\bibitem{AdamsFournier}
R.A. Adams and J.J.F. Fournier.
\newblock {\em {Sobolev Spaces}}.
\newblock Elsevier/Academic Press, 2003.

\bibitem{AgarwalNegahbanWainwright12}
A.~Agarwal, S.~Negahban, and M.J. Wainwright.
\newblock {Noisy matrix decomposition via convex relaxation: Optimal rates in
  high dimensions}.
\newblock {\em Ann. Statist.}, 40(2):1171--1197, 2012.

\bibitem{AkakpoLacour}
N.~Akakpo and C.~Lacour.
\newblock Inhomogeneous and anisotropic conditional density estimation from
  dependent data.
\newblock {\em Electron. J. Statist.}, 5:1618–1653, 2011.

\bibitem{AthreyaAtuncar98}
K.~B. Athreya and G.~S. Atuncar.
\newblock {Kernel Estimation for Real-Valued Markov Chains}.
\newblock {\em Sankhya}, 60:1--17, 1998.

\bibitem{BakryGentilLedoux14}
D.~Bakry, I.~Gentil, and M.~Ledoux.
\newblock {\em {Analysis and Geometry of Markov Diffusion Operators}}.
\newblock Springer International Publishing, 2014.

\bibitem{Bhattacharya}
R.N. Bhattacharya and E.C. Waymire.
\newblock {\em Stochastic Processes with Applications}.
\newblock SIAM, 2009.

\bibitem{Birge}
L.~Birg\'{e}.
\newblock {Robust tests for model selection}.
\newblock In {\em {From Probability to Statistics and Back: High-Dimensional
  Models and Processes -- A Festschrift in Honor of Jon A. Wellner}}, pages
  47--64. Institute of Mathematical Statistics, Beachwood, Ohio, USA, 2013.

\bibitem{BuneaSheWegkamp11}
F.~Bunea, Y.~She, and M.H. Wegkamp.
\newblock {Optimal selection of reduced rank estimators of high-dimensional
  matrices}.
\newblock {\em Ann. Statist.}, 39(2):1282--1309, 2011.

\bibitem{Cai_Candes_Shen}
J.~Cai, E.~J Cand{\`e}s, and Z.~Shen.
\newblock A singular value thresholding algorithm for matrix completion.
\newblock {\em SIAM J. Optim.}, 20(4):1956--1982, 2010.

\bibitem{CaiMaWu13}
T.T. Cai, Z.~Ma, and W.~Yihong.
\newblock {Sparse PCA: optimal rates and adaptive estimation}.
\newblock {\em Ann. Statist.}, 41:3074--3110, 2013.

\bibitem{CandesPlan10}
E.~J. Cand{\`e}s and Y.~Plan.
\newblock Matrix completion with noise.
\newblock {\em Proceedings of the IEEE}, 98(6):925--936, 2010.

\bibitem{CandesPlan11}
E.J. Cand{\`{e}}s and Y.~Plan.
\newblock Tight oracle bounds for low-rank matrix recovery from a minimal
  number of random measurements.
\newblock {\em IEEE Trans. Inform. Theory}, 57(4):2342--2359, 2011.

\bibitem{CarpentierKim18}
A.~{Carpentier} and A.K.H. {Kim}.
\newblock An iterative hard thresholding estimator for low rank matrix recovery
  with explicit limiting distribution.
\newblock {\em Stat. Sin.}, 28:1371--1393, 2018.

\bibitem{ChernozhukovChetverikovKato14AOS}
V.~Chernozhukov, D.~Chetverikov, and K.~Kato.
\newblock Anti-concentration and honest, adaptive confidence bands.
\newblock {\em Ann. Statist.}, 42(5):1787--1818, 2014.

\bibitem{Chodera}
J.D. Chodera, N.~Singhal, V.S. Pande, K.A. Dill, and W.C. Swope.
\newblock {Automatic discovery of metastable states for the construction of
  Markov models of macromolecular conformational dynamics}.
\newblock {\em The Journal of Chemical Physics}, 126(15):155101, 2007.

\bibitem{ChorowskyTrabs}
J.~Chorowski and M.~Trabs.
\newblock {Spectral estimation for diffusions with random sampling times}.
\newblock {\em Stochastic Process. Appl.}, 126(10):2976--3008, 2016.

\bibitem{Clemencon}
S.~Cl{\'e}men{\c c}on.
\newblock {\em {M\'ethodes d'ondelettes pour la statistique non param\'etrique
  des chaines de {Markov}}}.
\newblock PhD thesis, {Universit\'e Denis Diderot Paris 7}, 2000.

\bibitem{Maggioni}
R.~Coifman, I.~Kevrekidis, S.~Lafon, M.~Maggioni, and B.~Nadler.
\newblock {Diffusion Maps, Reduction Coordinates, and Low Dimensional
  Representation of Stochastic Systems}.
\newblock {\em Multiscale Modeling \& Simulation}, 7(2):842--864, 2008.

\bibitem{CravenWahba79}
P.~Craven and G.~Wahba.
\newblock {Smoothing noisy data with spline functions. Estimating the correct
  degree of smoothing by the method of generalized cross-validation}.
\newblock {\em Numerische Mathematik}, 31(4):377--403, 1979.

\bibitem{DoukhanGindes83}
P.~Doukhan and M.~Ghind{\`e}s.
\newblock {Estimation de la transition de probabilit{\'e} d'une cha{\^i}ne de
  Markov Do{\"e}blin-r{\'e}currente. {\'E}tude du cas du processus
  autor{\'e}gressif g{\' e}n{\'e}ral d'ordre 1.}
\newblock {\em Stochastic Process. Appl.}, 15:271--293, 1983.

\bibitem{DuanWangWenYuan19}
Y.~Duan, M.~Wang, Z.~Wen, and Y.~Yuan.
\newblock {Adaptive Low-Nonnegative-Rank Approximation for State Aggregation of
  Markov Chains}.
\newblock {\em SIAM J. Matrix Anal. Appl.}, 41(1):244--278.

\bibitem{Evans}
L.C. Evans.
\newblock {\em Partial Differential Equations}.
\newblock American Mathematical Society, 2010.

\bibitem{Garding}
L.~{Garding}.
\newblock On the asymptotic distribution of the eigenvalues and eigenfunctions
  of elliptic differential operators.
\newblock {\em MATHEMATICA SCANDINAVICA}, 1:237--255, 1953.

\bibitem{GineNickl16}
E.~{Gin\'{e}} and R.~{Nickl}.
\newblock {\em Mathematical Foundations of Infinite-Dimensional Statistical
  Methods}.
\newblock Cambridge Series in Statistical and Probabilistic Mathematics.
  Cambridge University Press, 2016.

\bibitem{GobetHoffmannReiss}
E.~Gobet, M.~Hoffmann, and M.~Reiss.
\newblock Nonparametric estimation of scalar diffusions based on low frequency
  data.
\newblock {\em Ann. Statist.}, 32(5):2223--2253, 2004.

\bibitem{Helemskii06}
A.~Ya. Helemskii.
\newblock {\em {Lectures and Exercises on Functional Analysis}}, volume 233 of
  {\em Translations of mathematical monographs}.
\newblock {American Mathematical Society}, 2006.

\bibitem{Hormander}
L.~H{\"o}rmander.
\newblock {The} {Weyl} {Calculus} of {Pseudo-Differential} {Operators}.
\newblock {\em Comm. Pure Appl. Math.}, 32:359--443, 1979.

\bibitem{Ivrii2000}
V.~Ivrii.
\newblock Sharp spectral asymptotics for operators with irregular coefficients.
\newblock {\em Int. Math. Res. Notices}, 2000(22):1155--1166, 2000.

\bibitem{IvriiWeyl}
V.~Ivrii.
\newblock {100 years of {W}eyl's law}.
\newblock {\em Bull. Math. Sci.}, 6(3):379--452, 2016.

\bibitem{JiangSunFan18}
B.~Jiang, Q.~Sun, and J.~Fan.
\newblock {Bernstein's inequality for general Markov chains}.
\newblock {\em arXiv preprint}, 2018.

\bibitem{Kent}
J.~Kent.
\newblock {Time-Reversible Diffusions}.
\newblock {\em Adv. Appl. Probab.}, 10:819--835, 1978.

\bibitem{Klopp11}
O.~Klopp.
\newblock Rank penalized estimators for high-dimensional matrices.
\newblock {\em Electron. J. Statist.}, 5:1161--1183, 2011.

\bibitem{KloppTrace}
O.~{Klopp}.
\newblock Noisy low-rank matrix completion with general sampling distribution.
\newblock {\em Bernoulli}, 20(1):282--303, 2014.

\bibitem{Koltai}
P.~Koltai, H.~Wu, F.~Noé, and C.~Sch{\"u}tte.
\newblock {Optimal Data-Driven Estimation of Generalized Markov State Models
  for Non-Equilibrium Dynamics}.
\newblock {\em Computation}, 6(1), 2018.

\bibitem{KoltchinskiiLouniciTsybakov11}
V.~Koltchinskii, K.~Lounici, and A.B. Tsybakov.
\newblock {Nuclear-norm penalization and optimal rates for noisy Low-rank
  matrix completion}.
\newblock {\em Ann. Statist.}, 39(5):2302--2329, 2011.

\bibitem{KoltchinskiXia}
V.~Koltchinskii and D.~Xia.
\newblock {Optimal Estimation of Low Rank Density Matrices}.
\newblock {\em J. Mach. Learn. Res.}, 16:1757--1792, 2015.

\bibitem{Lacour}
C.~Lacour.
\newblock {Adaptative estimation of the transition density of a {Markov}
  Chain}.
\newblock {\em {Ann. Inst. Henri Poincare (B) Probab. Stat.}}, 43(5):571--597,
  2007.

\bibitem{LacourSPA}
C.~Lacour.
\newblock {Nonparametric estimation of the stationary density and the
  transition density of a Markov chain}.
\newblock {\em Stochastic Process. Appl.}, 118(2):232--260, 2008.

\bibitem{LacourMassart15}
C.~Lacour and P.~Massart.
\newblock {Minimal penalty for Goldenshluger-Lepski method}.
\newblock {\em Stochastic Process. Appl.}, 126(12):3774--3789, 2015.

\bibitem{Li87}
K.C. Li.
\newblock Asymptotic optimality for $c_p$, $c_l$, cross-validation, and
  generalized cross-validation: Discrete index set.
\newblock {\em Ann. Statist.}, 15(3):958--975, 1987.

\bibitem{ZhangLikeli}
X.~Li, M.~Wang, and A.~Zhang.
\newblock {Estimation of Markov chain via rank-constrained likelihood}.
\newblock {\em Proceedings of the 35th International Conference on Machine
  Learning}, 2018.

\bibitem{LionsMagenes}
J.~Lions and E.~Magenes.
\newblock {\em Non-homogeneous boundary value problems and applications}.
\newblock Springer-Verlag, 1972.

\bibitem{NegahbanWainwright}
S.~{Negahban} and M.J. Wainwright.
\newblock Restricted strong convexity and weighted matrix completion: Optimal
  bounds with noise.
\newblock {\em J. Mach. Learn. Res.}, 13:1665--1697, 2012.

\bibitem{Nelson90}
D.B. Nelson.
\newblock {ARCH models as diffusion approximations}.
\newblock {\em J. Econom.}, 45(1):7--38, 1990.

\bibitem{Nickl20}
R.~Nickl.
\newblock {Bernstein–von Mises theorems for statistical inverse problems I:
  Schrödinger equation}.
\newblock {\em J. Eur. Math. Soc.}, 22(8):2697--2750, 2020.

\bibitem{NicklRay}
R.~Nickl and K.~Ray.
\newblock {Nonparametric statistical inference for drift vector fields of
  multi-dimensional diffusion processes}.
\newblock {\em Ann. Statist.}, 48(3):1383--1408, 2020.

\bibitem{NicklSohl}
R.~Nickl and J.~S{\"o}hl.
\newblock {Nonparametric Bayesian posterior contraction rates for discretely
  observed scalar diffusions}.
\newblock {\em Ann. Statist.}, 45(4):1664--1693, 2017.

\bibitem{NicklvdGWang20}
R.~Nickl, S.~van~de Geer, and S.~Wang.
\newblock {Convergence rates for Penalised Least Squares estimators in
  PDE-constrained regression problems}.
\newblock {\em SIAM-ASA J. Uncertain. Quantif.}, 8:374--413, 2020.

\bibitem{Norris}
J.R. Norris.
\newblock Long-time behaviour of heat flow: global estimates and exact
  asymptotics.
\newblock {\em Arch. Rational Mech. Anal.}, 140:161--195, 1997.

\bibitem{Pajor}
A.~Pajor.
\newblock {Metric Entropy of the Grassmann Manifold}.
\newblock In {\em Convex Geometric Analysis}. Cambridge University Press, 1999.

\bibitem{RobertsTweedie96}
G.O. Roberts and R.L. Tweedie.
\newblock {Exponential convergence of Langevin distributions and their discrete
  approximations}.
\newblock {\em Bernoulli}, 2(4):341--363, 1996.

\bibitem{MaggioniChem}
M.~A. Rohrdanz, W.~Zheng, M.~Maggioni, and C.~Clementi.
\newblock Determination of reaction coordinates via locally scaled diffusion
  map.
\newblock {\em J. Chem. Phys.}, 134:124116, 2011.

\bibitem{Roussas69}
G.G. Roussas.
\newblock {Nonparametric estimation in Markov processes}.
\newblock {\em Ann. Inst. Statist. Math.}, 21:73--87, 1969.

\bibitem{Sart}
M.~Sart.
\newblock {Estimation of the transition density of a Markov chain}.
\newblock {\em Ann. Henri {Poincaré}}, 50(3):1028--1068, 2014.

\bibitem{Schutte}
C.~Sch\"utte.
\newblock {Conformational dynamics: Modelling, theory, algorithm, and
  application to biomolecules}.
\newblock {\em Habilitation Thesis}, 1998.

\bibitem{SchwantesGibbonPande}
C.~R. Schwantes, R.~T. McGibbon, and V.~S. Pande.
\newblock {Perspective: Markov models for long-timescale biomolecular
  dynamics}.
\newblock {\em The Journal of Chemical Physics}, 141(9):090901, 2014.

\bibitem{Shukla}
D.~Shukla, C.X. Hernández, J.K. Weber, and V.S. Pande.
\newblock {Markov State Models Provide Insights into Dynamic Modulation of
  Protein Function}.
\newblock {\em Accounts of Chemical Research}, 48(2):414--422, 2015.

\bibitem{StewartSun90}
G.~Stewart and J.~Sun.
\newblock {\em {Matrix Perturbation Theory}}.
\newblock {Academic Press}, 1990.

\bibitem{SunGongDuanWang19}
Y.~Sun, H.~Gong, Y.~Duan, and M.~Wang.
\newblock Learning low-dimensional state embeddings and metastable clusters
  from time series data.
\newblock In {\em 33rd Conference on Neural Information Processing Systems
  (NeurIPS 2019)}, 2019.

\bibitem{Tsybakov}
A.B. Tsybakov.
\newblock {\em Introduction to nonparametric estimation}.
\newblock Springer, 2008.

\bibitem{WaaijZanten}
J.~van Waaij and H.~van Zanten.
\newblock {Gaussian process methods for one-dimensional diffusions: Optimal
  rates and adaptation}.
\newblock {\em Electron. J. Statist.}, 10(1):628--645, 2016.

\bibitem{VuLei2013}
V.~Vu and J.~Lei.
\newblock Minimax sparse principal subspace estimation in high dimensions.
\newblock {\em Ann. Statist.}, 41(6):2905--2947, 2013.

\bibitem{WahbaWold75}
G.~Wahba and S.~Wold.
\newblock A completely automatic french curve: fitting spline functions by
  cross validation.
\newblock {\em Comm. Statist. Theory Methods}, 4(1):1--17, 1975.

\bibitem{Wahl21}
M.~Wahl.
\newblock {Information inequalities for the estimation of principal
  components}.
\newblock {\em {Ann. Inst. Henri Poincare (B) Probab. Stat., to appear}}, 2021.

\bibitem{Weyl1911}
H.~Weyl.
\newblock {{\"U}ber die Asymptotische Verteilung der Eigenwerte}.
\newblock {\em Nachr. K{\"o}nigl. Ges. Wiss. G{\"o}ttingen}, pages 110--117,
  1911.

\bibitem{YuanEkiciLuMonteiro07}
M.~Yuan, A.~Ekici, Z.~Lu, and R.~Monteiro.
\newblock {Dimension Reduction and Coefficient Estimation in Multivariate
  Linear Regression}.
\newblock {\em J. R. Stat. Soc. Series B Stat. Methodol.}, 69(3):329--346,
  2007.

\bibitem{ZhangWang}
A.~Zhang and M.~Wang.
\newblock {Spectral State Compression of Markov Processes}.
\newblock {\em IEEE Trans. Inf. Theory}, 66(5):3202--3231, 2020.

\bibitem{ZhuLiWangZhang19}
Z.~Zhu, X.~Li, M.~Wang, and A.~Zhang.
\newblock {Learning Markov models via low-rank optimization}.
\newblock {\em Oper. Res., to appear}, 2021.

\end{thebibliography}
\bibliographystyle{plain}
\end{document}